\newtheorem{theorem}{Theorem}[section]
\newtheorem{prop}[theorem]{Proposition}
\newtheorem{lemma}[theorem]{Lemma}
\newtheorem{cor}[theorem]{Corollary}
\theoremstyle{definition}
\newtheorem{rem}[theorem]{Remark}
\newtheorem*{convention}{Conventions}
\newcommand{\IC}{\mathbb{C}}
\newcommand{\IG}{\mathbb{G}}
\newcommand{\IP}{\mathbb{P}}
\newcommand{\IZ}{\mathbb{Z}}
\newcommand{\cH}{\mathcal{H}}
\newcommand{\cI}{\mathcal{I}}
\newcommand{\cN}{\mathcal{N}}
\newcommand{\cO}{\mathcal{O}}
\newcommand{\cT}{\mathcal{T}}
\newcommand{\Ann}{\operatorname{Ann}}
\newcommand{\Grass}{\operatorname{Grass}}
\newcommand{\Hilb}{\operatorname{Hilb}}
\newcommand{\im}{\operatorname{im}}
\newcommand{\Pic}{\operatorname{Pic}}
\newcommand{\rk}{\operatorname{rk}}
\newcommand{\Sec}{\operatorname{Sec}}
\newcommand{\Sing}{\operatorname{Sing}}
\newcommand{\Tan}{\operatorname{Tan}}
\newcommand{\chitop}{\chi_{\text{top}}}
\newcommand{\res}{\text{res}}
\newcommand{\st}{\, \middle| \,}
\newcommand{\pa}[1]{\left( #1 \right)}
\newcommand{\set}[1]{\left\{ #1 \right\}}
\title[Degree of Tangent and Secant Variety to Surfaces]{The Degree of the Tangent and Secant Variety to a Projective Surface}
\begin{document}

\author{Andrea Cattaneo}
\address{Andrea Cattaneo, Institut Camille Jordan UMR 5208, Universit\'e Claude Bernard Lyon 1, 69622 Villeurbanne Cedex, France}
\email{cattaneo@math.univ-lyon1.fr}

\thanks{The author is supported by the LABEX MILYON (ANR-10-LABX-0070) of Universit\'e de Lyon, within the program ``Investissements d'Avenir'' (ANR-11-IDEX- 0007) operated by the French National Research Agency (ANR) and is member of GNSAGA of INdAM. The main parts of this paper were written while the author was granted a research fellowship by Universit\`a degli Studi dell'Insubria in Como. He wants to gratefully acknowledge all the people who helped him during the preparation of this paper: in particular S.~Boissi\`ere and A.~Sarti who suggested the problem from which the present paper originated, and C.~Ciliberto, F.~Flamini and A.~Lanteri for useful comments on the subject and suggestions during the writing. He also wants to thank the referee for his/her suggestions, which led to a substantial improvement of the first version of this paper.}


\begin{abstract}
In this paper we present a way of computing the degree of the secant (resp., tangent) variety of a smooth projective surface, under the assumption that the divisor giving the embedding in the projective space is $3$-very ample. This method exploits the link between these varieties and the Hilbert scheme $0$-dimensional subschemes of length $2$ of the surface.
\end{abstract}

\subjclass[2010]{Primary 14J28, 14N15.}

\keywords{Hilbert scheme, degree of secant variety, degree of tangent variety.}

\maketitle

\section*{Introduction}

In this paper we study the link between the secant variety of a smooth projective complex surface $S$ and the Hilbert scheme of $0$-dimensional subschemes of length $2$ of $S$. In particular, when the embedding of $S$ in the projective space $\IP^n$ is given by a $2$-very ample divisor, then (cf. \cite{cat-goe}) we can identify the Hilbert scheme $\Hilb^2 S$ with the subvariety of the Grassmannian $\IG(1, n)$ parametrising all the secant lines to $S$. We can then write (the class of) $\Hilb^2 S$ as a linear combination of Schubert cycles of $\IG(1, n)$, and we expect the coefficients of this linear combination to have some geometric meaning. In particular, they should reflect how the surface is embedded in $\IP^n$. As a consequence, denoting by $h$ the very ample divisor which embeds $S$ in $\IP^n$, we expect these coefficients to depend both on some intrinsic invariant of the surface, such as $K_S^2$, and on some properties of the embedding, such as the degree $h^2$ of the embedded surface. Our purpose is then to explicitly compute these coefficients and give also their enumerative interpretation: it turns out that one of them is exactly the degree of the secant variety of the embedded surface, whose value is computed in Theorem \ref{thm: degree secant variety}.

Under the assumption that the divisor $h$ is $3$-very ample, we can study in an analogous way the subvariety of the Grassmannian $\IG(2, n)$ parametrising the tangent planes to $S$ (i.e., the image of the Gauss map of $S$). As a result, we give a formula for the degree of the tangent variety to $S$ in Theorem \ref{thm: degree tangent variety}.

We shall say that these results can be obtained also in other more classical ways, for example as an application of the double point formula (see, e.g., \cite[Theorem 9.3]{intersection-theory} or \cite[$\S$0]{catanese}), but in our exposition we want to emphasize and exploit the link between the enumerative properties of the surface $S$ and the geometry of its Hilbert scheme $\Hilb^2 S$. Nevertheless, our formulae are quite simple and explicit and we can easily apply them to some examples of special interest: the case of a generic projective $K3$ surface and the case of the Veronese surfaces in $\IP^n$.

Our method, in principle, can also be used to study higher secant varieties to surfaces embedded by $k$-very ample divisors, but there are some difficulties. First of all, the growth of the dimension of the higher secant varieties forces one to work with higher-dimensional Schubert cycles in the Grassmannian $\IG(k - 1, n)$, which leads to a corresponding growth in the complexity of the computations. Moreover, in this paper we exploit a fact which is peculiar of the Hilbert scheme $\Hilb^2 S$, namely that it can be defined as a quotient of the blow up of $S \times S$ in the diagonal. This is not the case for $\Hilb^k S$ with $k > 2$ (cf. Remark \ref{rem: hilbert scheme construction}), and as a consequence in order to compute the degree of the higher secant varieties one needs to be more careful with the intersection theoretic arguments we propose here.

The structure of the paper is as follows. In Section \ref{sect: definitions} we recall the construction of the secant variety and of the Hilbert scheme of a projective surface, explaining in the last part the relation between them. In Section \ref{sect: secant variety general} we give some general results on the secant variety to a projective surface: in particular, we determine the class of the variety parametrising the secant lines to $S$ in the corresponding Grassmannian; we give an explicit enumerative interpretation of all the coefficients involved and compute all of them except the one corresponding to the degree of the secant variety of the surface. In Section \ref{sect: tangent variety general} we focus on the subvariety parametrising the tangent planes to the surface: writing its class in the cohomology ring of the Grassmannian as a combination of Schubert cycles, we can provide an enumerative interpretation of the coefficients which appear. In Section \ref{sect: intersection in hilb2} we recall some facts on the intersection theory in the Hilbert scheme, and determine the intersection number we will need in Section \ref{sect: deg sec s, deg tan s} to compute the degree of the secant and tangent variety. In Section \ref{sect: deg sec s, deg tan s} we exploit a linear relation and the intersection numbers computed in the previous Sections to prove our main results: in Theorem \ref{thm: degree secant variety} we compute the degree of the secant variety and in Theorem \ref{thm: degree tangent variety} the one of the tangent variety. Finally, in Section \ref{sect: examples} we specialise our result in two concrete situations, the one of $K3$ surfaces and of the image of $\IP^2$ under the $m$-th Veronese embedding.

\begin{convention}
Throughout we will work over the field $\IC$ of complex numbers. Moreover, we always implicitly assume that the embedded surfaces are non-de\-fec\-tive, meaning that their secant and tangent variety have the expected dimension, i.e., $\dim \Sec S = 5$ and $\dim \Tan S = 4$ (this will be a harmless constraint, see the discussion in Section \ref{sect: veronese}). As a consequence, from Section \ref{sect: secant variety general} we will always consider surfaces $S \subseteq \IP^n$ with $n \geq 5$.
\end{convention}


\section{The secant variety and the Hilbert scheme}\label{sect: definitions}

In this Section we briefly recall the definition and the construction of the secant variety $\Sec S$ to a smooth projective surface $S$ as well as of the Hilbert scheme $\Hilb^2 S$ parametrising the $0$-dimensional subschemes of length $2$ of $S$. Our main references are \cite[$\S$1(a)]{dale} and \cite[$\S$6]{bea} respectively.

We introduce here the notation we use throughout the paper for the Grassmannians: $\Grass(k, n)$ denotes the Grassmannian parametrising the $k$-dimensional subspaces of a complex $n$-dimensional vector space, while $\IG(k, n)$ denotes the Grassmannian parametrising the $k$-dimensional subspaces of a complex $n$-dimensional projective space. Hence $\IG(k, n) = \Grass(k + 1, n + 1)$.

\subsection{The secant variety}

Let $S \subseteq \IP^n$ be a smooth projective surface, which is not contained in any hyperplane. Consider the map
\[\begin{array}{rccl}
f: & (S \times S) \smallsetminus \Delta_S & \longrightarrow & \IG(1, n)\\
 & (P, Q) & \longmapsto & \text{line } \langle P, Q \rangle,
\end{array}\]
where $\Delta_S$ is the diagonal of $S \times S$. Let $\Gamma(S)$ be the closure of the graph of $f$ in $\IP^n \times \IP^n \times \IG(1, n)$, and $\Sigma(S)$ be the image of $\Gamma(S)$ in $\IG(1, n)$ under the projection on the last factor. Then $\Sigma(S)$ is the subset of the Grassmannian which parametrises the lines which are secant to $S$.

In order to define the secant variety, we consider the incidence variety
\[I = \set{(x, l) \in \IP^n \times \IG(1, n) \st x \in l},\]
and restrict it to the set of secant lines: we let $\Sigma B(S)$ be the inverse image in $I$ of $\Sigma(S)$ under the projection on the second factor. The \emph{secant variety}, $\Sec S$, of $S$ is the image of $\Sigma B(S)$ in $\IP^n$.

We have then the following situation (we denote by $pr$ the projections, and use the subscripts to indicate the factors):
{\small
\begin{equation}\label{eq: first diagram}
\xymatrix{\IP^n \times \IP^n \times \IG(1, n) \supseteq & \Gamma(S) \ar[dl]_{pr_{12}} \ar[dr]^{pr_3} & & \Sigma B(S) \ar[dl]_{pr_2} \ar[dr]^{pr_1} & \subseteq I \subseteq \IP^n \times \IG(1, n)\\
S \times S & & \Sigma(S) & & \Sec S}
\end{equation}}

We recall that
\begin{enumerate}
\item $pr_{12}$ is the blow up of $S \times S$ along $\Delta_S$;
\item $pr_3$ is generically finite, and if the generic secant line cuts $S$ in $m$ distinct points, then it is $m(m - 1) : 1$. In particular:
\begin{enumerate}
\item if $n = 3$, and $S$ is a surface of degree $d$, then $m = d$,
\item if $n > 3$, then by \cite[Theorem 1.8]{dale} or \cite[Corollary 2.7, Corollary 2.8]{holme-roberts} we have $m = 2$, and so $pr_3$ is $2:1$;
\end{enumerate}
\item $pr_2$ is a $\IP^1$-bundle;
\item the fibre $pr_1^{-1}(x)$ represents all the secants passing through $x \in \Sec S$.
\end{enumerate}

\subsection{The Hilbert scheme}

Let $S$ be a projective surface. The (second) symmetric product $S^{(2)}$ of $S$ is the quotient of $S \times S$ by the involution exchanging the two factors: it is the variety representing the effective $0$-dimensional cycles on $S$, and it is singular along the image of the diagonal. Let $\varepsilon: \Hilb^2 S \longrightarrow S^{(2)}$ be the blow up of the singular locus. We obtain then a smooth variety, whose points parametrize the $0$-dimensional subschemes of length $2$ of $S$. The morphism $\varepsilon$ is called the Hilbert--Chow morphism.

Another way to define the Hilbert scheme $\Hilb^2 S$ is to blow up $S \times S$ along the fixed locus of the involution, i.e., the diagonal $\Delta_S$, and then take the quotient of the blown up variety by the induced involution. This leads to the commutative square
\begin{equation}\label{eq: second diagram}
\xymatrix{\widetilde{S \times S} \ar[r]^\eta \ar[d]_\rho & S \times S \ar[d]^\pi\\
\Hilb^2 S \ar[r]^\varepsilon & S^{(2)},}
\end{equation}
and we recall that the action induced on $\widetilde{S \times S}$ by the one on $S \times S$ acts as the identity on the exceptional divisor.

\begin{rem}\label{rem: hilbert scheme construction}
We can define $\Hilb^k S$ as the variety parametrising the $0$-dimensional subschemes of $S$ of length $k$. The first of the two constructions we recalled for $\Hilb^2 S$ generalises to the Hilbert schemes $\Hilb^k S$. However, the second one does not generalise: call $\Delta_{ij}$ the subset of $S^k$ of the $k$-uples $(x_1, \ldots, x_k)$ with $x_i = x_j$, and let $\Delta = \pi \pa{\bigcup_{i, j} \Delta_{ij}}$; then $\Delta$ is the singular locus of $S^{(k)}$ and $\varepsilon$ coincides with the blow up of $\Delta$ only on an open part of $S^{(k)}$ (the image in $S^{(k)}$ of the set of $k$-uples with at most two equal entries), whose complement has codimension at least $3$ (cf. \cite[$\S$6, p. 766]{bea}). Because of this, the methods we will describe may be hard to generalise to higher dimensional secant varieties.
\end{rem}

\subsection{\texorpdfstring{$k$}{k}-very ampleness}
We now recall the concept of $k$-very ampleness (cf. \cite[$\S$2]{beltrametti-sommese} and \cite{cat-goe}). Let $S$ be a surface, and $h$ be a divisor on it. Fix a $0$-dimensional subscheme $Z$ of $S$ of length $k + 1$, defined by the ideal sheaf $\cI_Z$, and consider the exact sequence
\[0 \longrightarrow \cI_Z \otimes \cO_S(h) \longrightarrow \cO_S(h) \longrightarrow \cO_Z \otimes \cO_S(h) \longrightarrow 0.\]
This sequence induces the long cohomology sequence
\begin{equation}\label{eq: ses defining res}
0 \longrightarrow H^0(S, \cI_Z \otimes \cO_S(h)) \longrightarrow H^0(S, \cO_S(h)) \xrightarrow{\res_Z} H^0(Z, \cO_Z \otimes \cO_S(h)) \longrightarrow \ldots,
\end{equation}
and we say (cf. \cite[Definition 0.1(iii)]{cat-goe}) that $h$ is \emph{$k$-very ample} if the restriction map $\res_Z$ in \eqref{eq: ses defining res} is onto for every $0$-dimensional subscheme $Z$ of length at most $k + 1$.

It is immediate to see that $0$-very ampleness is equivalent to global generation, and that $1$-very ampleness is equivalent to very ampleness. Moreover, any $(k - 1)$-very ample divisor $h$ induces a map
\begin{equation}\label{eq: hilb to grass map}
\varphi_{k - 1}: \Hilb^k S \longrightarrow \Grass(k, H^0(S, \cO_S(h))^*),
\end{equation}
associating to any $0$-dimensional subscheme of length $k$ of $S$ the point representing the $k$-dimensional subspace $H^0(Z, \cO_Z \otimes \cO_S(h))^*$ in $H^0(S, \cO_S(h))^*$. The answer to the question whether this map is an embedding is given in the following Theorem.

\begin{theorem}[{\cite[Main Theorem]{cat-goe}}]\label{thm: cat-goe}
The map $\varphi_{k - 1}$ defined in \eqref{eq: hilb to grass map} is an embedding if and only if $h$ is $k$-very ample.
\end{theorem}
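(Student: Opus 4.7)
The plan is to combine a cohomological codimension count for set-theoretic injectivity of $\varphi_{k-1}$ with a tangent-space computation, and then to reverse both arguments for the converse implication.

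For the forward direction, assume $h$ is $k$-very ample, so in particular $(k-1)$-very ample, making $\varphi_{k-1}$ well-defined; since $\Hilb^k S$ is projective the map is automatically closed, so it suffices to verify injectivity on closed points and on tangent vectors. For point-injectivity, if $Z_1\ne Z_2$ of length $k$ shared the same image, then $V\coloneqq H^0(S,\cI_{Z_1}\otimes\cO_S(h)) = H^0(S,\cI_{Z_2}\otimes\cO_S(h))$; setting $\cI_W=\cI_{Z_1}\cap\cI_{Z_2}$ one has $W\supsetneq Z_1$, so I can interpose an ideal $\cI_W\subseteq\cI_{W'}\subsetneq\cI_{Z_1}$ with $W'$ of length $k+1$, and the sandwich
\[V\subseteq H^0(S,\cI_{W'}\otimes\cO_S(h))\subseteq H^0(S,\cI_{Z_1}\otimes\cO_S(h))=V\]
forces the middle term to equal $V$, of dimension $h^0(\cO_S(h))-k$ by $(k-1)$-very ampleness, whereas $k$-very ampleness would force it to be $h^0(\cO_S(h))-(k+1)$, a contradiction. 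For injectivity of the differential, I would identify $T_{[Z]}\Hilb^k S = \operatorname{Hom}_{\cO_S}(\cI_Z,\cO_Z)$ and describe $d\varphi_{k-1}$ as the map induced by restricting sections of $\cO_S(h)$ to the square-zero thickening $Z_\varepsilon$ attached to a tangent vector; a nonzero tangent vector produces a length-$(k+1)$ subscheme $\tilde Z$ of $S$ supported on $Z$, and applying $k$-very ampleness to $\tilde Z$ yields a section of $\cI_Z\otimes\cO_S(h)$ whose first-order restriction to $Z_\varepsilon$ is nontrivial, so the tangent vector escapes $\ker d\varphi_{k-1}$.

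For the converse, suppose $\varphi_{k-1}$ is a closed embedding but $h$ fails $k$-very ampleness at some length-$(k+1)$ subscheme $Z$. Well-definedness already forces $(k-1)$-very ampleness, so $\dim H^0(S,\cI_{Z'}\otimes\cO_S(h))=h^0(\cO_S(h))-k$ for every length-$k$ subscheme $Z'$, while the failure gives $\dim H^0(S,\cI_{Z}\otimes\cO_S(h))\geq h^0(\cO_S(h))-k$; combined with $H^0(S,\cI_Z\otimes\cO_S(h))\subseteq H^0(S,\cI_{Z'}\otimes\cO_S(h))$ for any $Z'\subsetneq Z$ of length $k$, this forces equality. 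If $Z$ contains two distinct length-$k$ subschemes this contradicts injectivity of $\varphi_{k-1}$ on points; otherwise $Z$ has a unique length-$k$ subscheme $Z'$, and since the quotient $\cI_{Z'}/\cI_Z$ is one-dimensional it is automatically annihilated by $\cI_{Z'}$, so the inclusion $Z'\hookrightarrow Z$ realises a nonzero tangent vector at $[Z']$ which $d\varphi_{k-1}$ annihilates.

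The main obstacle I anticipate is setting up the dictionary that identifies $d\varphi_{k-1}$ with a restriction-of-sections map at the first-order level: once that is in place the argument runs on parallel dimension counts in both directions, but arranging it cleanly requires careful work with the universal family over $\Hilb^k S$ and its pushforward to the Grassmannian.
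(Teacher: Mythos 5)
The paper does not prove this statement at all: it is quoted as the Main Theorem of Catanese--G\"ottsche \cite{cat-goe}, so there is no internal proof to compare against, and your attempt should be judged as a reconstruction of the cited result. As such, its architecture is the standard one and is essentially sound: properness of $\Hilb^k S$ reduces the embedding condition to injectivity on closed points and on tangent vectors; the point-separation step via the sandwich $V\subseteq H^0(S,\cI_{W'}\otimes\cO_S(h))\subseteq V$ with $W'$ of length $k+1$ interposed between $Z_1$ and the scheme-theoretic union is complete and correct; and both branches of the converse work, provided you add the observation that uniqueness of the length-$k$ subscheme of the offending $Z$ forces $Z$ to be supported at a single point, which is what guarantees that $\cI_{Z'}/\cI_Z\cong\IC_p$ actually embeds into the socle of $\cO_{Z'}$ and hence defines a nonzero element of $\operatorname{Hom}_{\cO_S}(\cI_{Z'},\cO_{Z'})$.

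Two points need repair before this is a proof. First, the identification of $d\varphi_{k-1}$ at $[Z]$ with the natural pairing $\operatorname{Hom}_{\cO_S}(\cI_Z,\cO_Z)\to\operatorname{Hom}\bigl(H^0(S,\cI_Z\otimes\cO_S(h)),\,H^0(Z,\cO_Z\otimes\cO_S(h))\bigr)$ is the technical heart of the theorem, and you have only flagged it; both tangent-space arguments (forward and converse) are conditional on it, so as written the proposal is a correct strategy rather than a complete argument. Second, in the forward direction the passage from a nonzero tangent vector $\phi\in\operatorname{Hom}_{\cO_S}(\cI_Z,\cO_Z)$ to a length-$(k+1)$ subscheme $\tilde Z$ is more delicate than stated: $\im\phi$ may be contained in the maximal ideal of every local component of $\cO_Z$, so composing with evaluation at a support point can kill $\phi$. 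One must instead choose a one-dimensional quotient of $\im\phi$ itself, obtaining a surjection $\cI_Z\to\IC_p$ whose kernel is the ideal of the desired $\tilde Z$; then $k$-very ampleness gives a section in $H^0(S,\cI_Z\otimes\cO_S(h))\smallsetminus H^0(S,\cI_{\tilde Z}\otimes\cO_S(h))$ detecting $\phi$. With these two repairs the argument goes through and recovers the cited theorem.
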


\subsection{The link}

We begin to see the link between the Hilbert scheme and the secant variety since the diagrams \eqref{eq: first diagram} and \eqref{eq: second diagram} overlap:
\[\xymatrix{ & \Gamma(S) = \widetilde{S \times S} \ar[rr]^(0.6){\eta = pr_{12}} \ar[dl]_{\rho} \ar[dr]^{pr_3} & & S \times S\\
\Hilb^2 S & & \Sigma(S) & \subseteq \IG(1, n).}\]

As observed, the action on $S \times S$ is the one exchanging the two factors, and fixes the diagonal $\Delta_S$. The induced action on $\widetilde{S \times S}$ coincides with this one outside the exceptional divisor, and fixes it pointwise. This implies that the morphism $pr_3: \widetilde{S \times S} \longrightarrow \Sigma(S)$ is constant on the orbits of the action, and so we have a morphism $\varphi$ making the following diagram commute:
\begin{equation}\label{eq: hilb to sigma s}
\xymatrix{ & \Gamma(S) = \widetilde{S \times S} \ar[dl]_{\rho} \ar[dr]^{pr_3} & \\
\Hilb^2 S \ar[rr]^\varphi & & \Sigma(S).}
\end{equation}

We now want to compare this map with the map $\varphi_1$ defined in \eqref{eq: hilb to grass map}.

\begin{lemma}\label{lemma: geometric description}
Let $h$ be a very ample divisor on the surface $S$. Then the maps $\varphi_1$ of \eqref{eq: hilb to grass map} and $\varphi$ of \eqref{eq: hilb to sigma s} coincide.
\end{lemma}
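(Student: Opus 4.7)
The plan is to show the two maps agree on a dense open subset of $\Hilb^2 S$ and then invoke the fact that $\IG(1,n)$ is separated.

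First I would pin down the identification of the two targets. Since $S\subseteq\IP^n$ and $h$ is very ample, we may take the embedding to be induced by the complete linear system, so $\IP^n = \IP(H^0(S,\cO_S(h))^*)$ and correspondingly $\IG(1,n) = \Grass(2, H^0(S,\cO_S(h))^*)$. Under this identification, a point $P\in S$ corresponds to the class of the evaluation functional $\mathrm{ev}_P\in H^0(S,\cO_S(h))^*$, and a line $\langle P,Q\rangle$ to the $2$-dimensional subspace $\langle \mathrm{ev}_P,\mathrm{ev}_Q\rangle$.

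Next I would check agreement on the open subset $U\subseteq\Hilb^2 S$ parametrising reduced subschemes $Z=\{P,Q\}$ with $P\neq Q$. On $U$, $\varphi(Z) = \langle P,Q\rangle$ by the very definition of $\varphi$ coming from \eqref{eq: hilb to sigma s}. For $\varphi_1$, dualising the restriction map in \eqref{eq: ses defining res} gives $\varphi_1(Z) = H^0(Z,\cO_Z\otimes\cO_S(h))^*\hookrightarrow H^0(S,\cO_S(h))^*$. For $Z=\{P,Q\}$, this dual subspace is precisely $\langle \mathrm{ev}_P,\mathrm{ev}_Q\rangle$, which under the identification above is the line $\langle P,Q\rangle$. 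Hence $\varphi|_U=\varphi_1|_U$.

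Finally I would conclude by density and separation. Both $\varphi$ and $\varphi_1$ are morphisms (the former factors through $\rho$, the latter exists by the assumed $1$-very ampleness) from the smooth irreducible variety $\Hilb^2 S$ to the separated scheme $\IG(1,n)$; since they coincide on the dense open $U$, they coincide everywhere.

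The only genuinely delicate point is the agreement on the non-reduced locus, i.e.\ the exceptional divisor of $\varepsilon$: there the argument by separation sidesteps the need to trace through Taylor expansions, but if one wanted a direct verification, one would identify a non-reduced $Z$ of length $2$ supported at $P$ with a tangent direction $v\in T_PS$, check that $H^0(Z,\cO_Z\otimes\cO_S(h))^*$ is spanned by $\mathrm{ev}_P$ and a first-order derivative of evaluation along $v$, and match this with the tangent-line limit of secants computed by $pr_3$ on the exceptional divisor of $\eta$. Since separation bypasses this, the main conceptual obstacle is really just matching the two descriptions of the target Grassmannian at the outset.
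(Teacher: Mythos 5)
Your proposal is correct, but it reaches the conclusion by a different route than the paper. The paper's proof is a single uniform computation valid for \emph{every} length-$2$ subscheme $Z$ (reduced or not): very ampleness gives surjectivity of $\res_Z$, dualising identifies $H^0(Z,\cO_Z\otimes\cO_S(h))^*$ with $\Ann H^0(S,\cI_Z\otimes\cO_S(h))$, and this annihilator is recognised as the line spanned by $Z$ in $\IP(H^0(S,\cO_S(h))^*)$, which is $\varphi(Z)$ by construction. You instead verify the identity only on the dense open locus $U$ of reduced subschemes --- where your computation with evaluation functionals is exactly the paper's annihilator computation in coordinates --- and then extend to all of $\Hilb^2 S$ by reducedness of the source and separatedness of the Grassmannian. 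What your approach buys is that you never have to argue that, on the exceptional divisor, the limit of secant lines computed by $pr_3$ agrees with the linear span of the non-reduced scheme; the paper's phrase ``line through $Z$ $=\varphi(Z)$'' quietly absorbs that identification, whereas your separatedness argument makes it unnecessary. What the paper's approach buys is a pointwise, case-free description of $\varphi_1(Z)$ for all $Z$, which is also what makes the geometric interpretation in the subsequent Remark immediate. Both arguments require the same preliminary normalisation you state explicitly (the embedding is by the complete linear system, so that the two target Grassmannians are identified), and both implicitly use that $\varphi$ is a morphism because $pr_3$ descends through the quotient $\rho$; neither point is a gap.
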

\begin{proof}
Since both $\varphi_1$ and $\varphi$ are morphisms, it suffices to show that they agree set-theoretically on a dense open subset of $\Hilb^2 S$. We will consider then the open subset obtained as the complement to the exceptional divisor of the Hilbert--Chow morphism $\varepsilon$. This open set parametrises the length $2$ subschemes of $S$ supported on two distinct points.

Use $h$ to embed $S$ in $\IP^n = \IP(H^0(S, \cO_S(h))^*)$. Let $Z$ be a $0$-dimensional subscheme of $S$ of length $2$, defined by the ideal sheaf $\cI_Z$, whose support consists of the distinct points $P$ and $Q$. Consider then $(P, Q) \in \widetilde{S \times S}$ a lift of $Z$, and observe that via the identification $\widetilde{S \times S} = \Gamma(S)$ this point corresponds to $(P, Q, \langle P, Q \rangle)$. As a consequence, $pr_{13}(P, Q, \langle P, Q \rangle) = \langle P, Q \rangle$, and since this expression is clearly symmetric in $P$ and $Q$ we deduce that
\[\varphi(Z) = \text{line through } Z \text{ in } \IP(H^0(S, \cO_S(h))^*)\]
on this open subset.

We will now show that $\varphi_1(Z)$ admits the same description on the open subset we are considering. Since $\cO_S(h)$ is a very ample line bundle on $S$, we have the short exact sequence
\[0 \longrightarrow H^0(S, \cI_Z \otimes \cO_S(h)) \longrightarrow H^0(S, \cO_S(h)) \longrightarrow H^0(Z, \cO_Z \otimes \cO_S(h)) \longrightarrow 0,\]
whose dual
\[0 \longrightarrow H^0(Z, \cO_Z \otimes \cO_S(h))^* \longrightarrow H^0(S, \cO_S(h))^* \longrightarrow H^0(S, \cI_Z \otimes \cO_S(h))^* \longrightarrow 0\]
shows that $H^0(Z, \cO_Z \otimes \cO_S(h))^* = \Ann H^0(S, \cI_Z \otimes \cO_S(h))$. But then
\[\begin{array}{rl}
\varphi_1(Z) = & \text{line } \IP(H^0(Z, \cO_Z \otimes \cO_S(h))^*) \text{ in } \IP(H^0(S, \cO_S(h))^*) =\\
= & \text{line } \IP(\Ann H^0(S, \cI_Z \otimes \cO_S(h))) \text{ in } \IP(H^0(S, \cO_S(h))^*) =\\
= & \text{line through } Z \text{ in } \IP(H^0(S, \cO_S(h))^*).
\end{array}\]
\end{proof}

\begin{cor}
Let $h$ be a very ample divisor on the surface $S$. Then the morphism $\varphi$ of \eqref{eq: hilb to sigma s} is an embedding if and only of $h$ is $2$-very ample.
\end{cor}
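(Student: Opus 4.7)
The plan is to simply combine the two results immediately preceding the corollary. The previous lemma, applied under the hypothesis that $h$ is very ample, identifies the morphism $\varphi$ of \eqref{eq: hilb to sigma s} with the morphism $\varphi_1$ of \eqref{eq: hilb to grass map} (the case $k=2$). Then Theorem \ref{thm: cat-goe}, again with $k=2$, states precisely that $\varphi_1$ is an embedding if and only if $h$ is $2$-very ample.

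So the proof is essentially a one-line deduction: by the lemma, $\varphi = \varphi_1$; by Theorem \ref{thm: cat-goe}, $\varphi_1$ is an embedding if and only if $h$ is $2$-very ample; therefore the same holds for $\varphi$. The only thing worth checking is that the hypothesis of the lemma (very ampleness of $h$) is compatible with the statement: this is automatic since $2$-very ample implies very ample (as noted in the excerpt, $1$-very ampleness is equivalent to very ampleness, and $k$-very ampleness descends to $k'$-very ampleness for $k' \leq k$ by restricting to subschemes of smaller length), so on the ``only if'' direction we get very ampleness for free, while on the ``if'' direction we have assumed it outright.

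There is really no obstacle here; the work has been done in the preceding lemma and in Theorem \ref{thm: cat-goe}. The corollary is just the combination of these two facts, and its statement is included because it is the geometrically meaningful formulation: it tells us that $\Hilb^2 S$ embeds as $\Sigma(S)$ inside $\IG(1,n)$ precisely in the $2$-very ample regime, which is the hypothesis under which the rest of the paper operates.
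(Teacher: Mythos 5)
Your proposal is correct and is essentially identical to the paper's own argument: the paper also just invokes the preceding lemma to identify $\varphi$ with $\varphi_1$ and then applies Theorem \ref{thm: cat-goe}. The extra remark about $2$-very ampleness implying very ampleness is a sensible sanity check but not needed beyond what the paper states.
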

\begin{proof}
In fact this is true for $\varphi_1$ of \eqref{eq: hilb to grass map} by Theorem \ref{thm: cat-goe}.
\end{proof}

\begin{rem}
Let $S$ be a surface embedded in $\IP^n$ by means of the very ample divisor $h$. The map $\varphi_{k - 1}$ defined in \eqref{eq: hilb to grass map} associates to any $0$-dimensional subscheme $Z$ of length $k$ of $S$ the point representing the linear subspace of $\IP^n$ spanned by $Z$, i.e., the unique $(k - 1)$-dimensional linear subspace of $\IP^n$ containing $Z$.
\end{rem}

Thanks to this geometric description it is now easy to see that if the embedding of $S$ in $\IP^n$ is given by a $2$-very ample line bundle, then (the image of) $S$ contains no lines.

\begin{prop}[{\cite[(0.5.1)]{bs-kspannedness}}]
Let $S$ be a surface and $h$ be a very ample divisor on it. If there exists a divisor $l$ such that $l \simeq \IP^1$ and $\deg \cO_S(h)_{|_l} = 1$, then $h$ is not $2$-very ample.
\end{prop}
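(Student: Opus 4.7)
My plan is to exhibit an explicit length-$3$ subscheme $Z$ on $S$ for which the restriction map $\res_Z$ of \eqref{eq: ses defining res} fails to be surjective, thus violating the definition of $2$-very ampleness (applied with $k=2$, so $\ell(Z) = 3$).

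The natural candidate is to take $Z$ to be three distinct points on the line $l \simeq \IP^1$ (equivalently, any length-$3$ subscheme supported on $l$). The geometric intuition is that under the embedding given by $h$, the curve $l$ is sent to a projective line (since $\deg \cO_S(h)|_l = 1$), so the three points of $Z$ are collinear in $\IP^n$ and cannot span a plane; yet $\varphi_2$ would need to send $Z$ to such a plane. To turn this into a clean cohomological statement, I would compare $Z$ with $l$ via the inclusion of ideals $\cI_l \subseteq \cI_Z$, which gives $H^0(S, \cI_l \otimes \cO_S(h)) \subseteq H^0(S, \cI_Z \otimes \cO_S(h))$.

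First I would write the restriction sequence for $l$ itself:
\[
0 \longrightarrow \cI_l \otimes \cO_S(h) \longrightarrow \cO_S(h) \longrightarrow \cO_l \otimes \cO_S(h) \longrightarrow 0,
\]
noting that $\cO_l \otimes \cO_S(h) \simeq \cO_{\IP^1}(1)$ has exactly $2$-dimensional space of global sections. Hence
\[
h^0(S, \cI_l \otimes \cO_S(h)) \geq h^0(S, \cO_S(h)) - 2,
\]
and therefore the same lower bound holds for $h^0(S, \cI_Z \otimes \cO_S(h))$. Plugging this into the left-exact sequence \eqref{eq: ses defining res} for $Z$, the image of $\res_Z$ has dimension at most $2$, whereas $h^0(Z, \cO_Z \otimes \cO_S(h)) = 3$ since $Z$ has length $3$. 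So $\res_Z$ is not surjective, and $h$ is not $2$-very ample.

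I do not expect a serious obstacle here: the only mildly subtle point is ensuring that $Z$ can really be chosen of length $3$ on $l$, but this is obvious since $l$ is irreducible of dimension $1$, so three distinct closed points on $l$ give a reduced length-$3$ subscheme of $S$. The proof is essentially a two-line cohomological comparison between $\cI_l$ and $\cI_Z$ once the right test subscheme is chosen.
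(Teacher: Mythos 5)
Your argument is correct and complete. Note, though, that the paper does not actually prove this proposition at all: it simply declares it a classical fact and refers to Beltrametti--Sommese, so your write-up supplies a self-contained argument where the paper gives only a citation. The proof you give is the standard one and almost certainly the intended one: since $\cO_S(h)|_l \simeq \cO_{\IP^1}(1)$ has only a $2$-dimensional space of sections, the inclusion $\cI_l \otimes \cO_S(h) \hookrightarrow \cI_Z \otimes \cO_S(h)$ forces $h^0(S,\cI_Z \otimes \cO_S(h)) \geq h^0(S,\cO_S(h)) - 2$, so the image of $\res_Z$ has dimension at most $2 < 3 = h^0(Z, \cO_Z \otimes \cO_S(h))$, and $\res_Z$ cannot be onto. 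The only imprecision is your parenthetical claim that three distinct points on $l$ is ``equivalently, any length-$3$ subscheme supported on $l$'': a non-reduced length-$3$ subscheme whose support lies on $l$ need not be a closed subscheme \emph{of} $l$ (e.g.\ a fat point with a tangent direction transverse to $l$), and for such $Z$ the inclusion $\cI_l \subseteq \cI_Z$ fails. Since your argument only ever uses the reduced subscheme of three distinct points of $l$, which genuinely satisfies $Z \subseteq l$, this does not affect the validity of the proof; just drop or restrict that parenthetical.
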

\begin{proof}
This Proposition is a classical result on $k$-very ampleness (see, e.g., \cite{beltrametti-sommese} or \cite[(0.5.1)]{bs-kspannedness}), it also follows directly from the geometric description of the map $\varphi$ given in the proof of Lemma \ref{lemma: geometric description}, as we show now.

Assume by contradiction that $h$ is $2$-very ample. In particular, it is very ample and so embeds $S$ in $\IP^n = \IP(H^0(S, \cO_S(h))^*)$ in such a way that $l$ is a line contained in $S$. From Lemma \ref{lemma: geometric description} we deduce that the map $\varphi$ is constant on $\Hilb^2 l \subseteq \Hilb^2 S$, with value the point of $\IG(1, n)$ corresponding to the line $l$, thus contradicting the fact that $\varphi$ is an embedding.
\end{proof}

\begin{rem}\label{rem: geom descr}
Assume that $S$ is embedded in $\IP^n$ by means of a $k$-very ample divisor $h$, with $k \geq 2$. This embedding has the property that for any $0$-dimensional subscheme $Z$ of $S$ of length $k$, the linear subspace spanned by $Z$ intersects $S$ \emph{exactly} in $Z$ (a priori, it could have cut on $S$ a subscheme of higher length containing $Z$). In particular:
\begin{enumerate}
\item If $k \geq 2$, then for any $0$-dimensional subscheme $Z$ of $S$ of length $2$ the secant line spanned by $Z$ has no further intersections with $S$. As a consequence, a tangent line meets $S$ only in the tangency point, and so we deduce that for any $P \in S$ we have $T_P S \cap S = \{ P \}$.
\item If $k \geq 3$, then for any pair of distinct $0$-dimensional subschemes $Z$, $Z'$ of $S$ of length $2$, the secant lines spanned by $Z$ and $Z'$ are either disjoint or they meet in a point of $S$. In fact, if they intersect away from $S$, then these two lines span a plane which contains a length $4$ subscheme of $S$, which contradicts the $3$-very ampleness of $h$. As a consequence, each point in $\Sec S \smallsetminus S$ belongs to one and exactly one secant line. In particular, any pair of tangent planes are disjoint, i.e., for $P, Q \in S$ we have that $T_P S \cap T_Q S \neq \varnothing$ if and only if $P = Q$.
\end{enumerate}
\end{rem}

\section{The secant variety in the Grassmannian}\label{sect: secant variety general}

In this Section we want to present a strategy to determine the degree of the secant variety of a smooth surface.

Let $S$ be a smooth projective surface, embedded in $\IP^n$ by means of the very ample divisor $h$: call $d = h^2$ the degree of $S$ in $\IP^n$. In this Section we assume that $n \geq 5$ and that $h$ is $3$-very ample: as a consequence we have $\Hilb^2 S \simeq \Sigma(S) \subseteq \IG(1, n)$. We want to determine the class $[\Sigma(S)]$ of $\Sigma(S)$ in the cohomology ring of $\IG(1, n)$, so we begin this Section describing the Schubert cycles we are interested in the Grassmannian.

As a matter of notation, we will denote by $\Lambda_k$ a generic $k$-dimensional linear subspace of $\IP^n$, and given a subvariety $W$ in a manifold $V$ we denote by $[W]$ the class of $W$ in the cohomology ring of $V$.

Throughout this Section, we refer to \cite[$\S$1.5]{gh} both for the notation for Schubert cycles ($\sigma_{a_1, \ldots, a_k}$) and the intersection theoretic properties that we use. In particular, we recall that the Schubert cycle $\sigma_{a_1, \ldots, a_k}$ in $\Grass(k, n)$ parametrises all the $\Lambda_k \subseteq V_n$ such that
\[\dim (\Lambda_k \cap L_{n - k + 1 - a_i}) \geq i \qquad \text{for all } i,\]
where $V_n$ is a complex vector space of dimension $n$ and $0 = L_0 \subseteq L_1 \subseteq \ldots \subseteq L_{n - 1} \subseteq L_n = V_n$ is a complete flag in $V_n$.

\subsection{The variety of secant lines in the Grassmannian}
The dimension of $\Grass(2, n + 1)$ is $2(n - 1)$, and the Pl\"ucker map embeds it in $\IP^{N - 1}$ with $N = \binom{n + 1}{2} = \frac{n(n + 1)}{2}$. The Pl\"ucker embedding is induced by the linear system associated to the Schubert cycle $\sigma_{1, 0}$, which represents (i.e., its points are in bijection with) all the lines in $\IP^n$ which meet a fixed $\Lambda_{n - 2}$. This means that
\[P = \varphi_{|\sigma_{1, 0}|}: \IG(1, n) \longrightarrow \IP^{\frac{(n + 2)(n - 1)}{2}}, \qquad P^*(\cH) = \sigma_{1, 0},\]
where $P$ is the Pl\"ucker embedding and $\cH$ is a hyperplane in $\IP^n$.

There are three Schubert cycles of codimension $4$ in $\IG(1, n)$:
\begin{enumerate}
\item $\sigma_{4, 0}$, which represents all the lines in $\IP^n$ meeting a fixed $\Lambda_{n - 5}$;
\item $\sigma_{3, 1}$, which represents all the lines in $\IP^n$ contained in a fixed $\Lambda_{n - 1}$ and meeting a fixed $\Lambda_{n - 4} \subseteq \Lambda_{n - 1}$;
\item $\sigma_{2, 2}$, which represents all the lines in $\IP^n$ contained in a fixed $\Lambda_{n - 2}$.
\end{enumerate}
There are three Schubert cycles of dimension $4$, i.e., of codimension $2(n - 3)$, in $\IG(1, n)$:
\begin{enumerate}
\item $\sigma_{n - 1, n - 5}$, which represents all the lines in $\IP^n$ contained in a fixed $\Lambda_5$ through a fixed point $P \in \Lambda_5$;
\item $\sigma_{n - 2, n - 4}$, which represents all the lines in $\IP^n$ contained in a fixed $\Lambda_4$ and meeting a fixed line $\Lambda_1 \subseteq \Lambda_4$;
\item $\sigma_{n - 3, n - 3}$, which represents all the lines in $\IP^n$ contained in a fixed $\Lambda_3$.
\end{enumerate}

\begin{rem}
We have then that $\sigma_{n - 3, n - 3} \simeq \IG(1, 3)$, that $\sigma_{n - 2, n - 4}$ is isomorphic to the Schubert cycle $\sigma_{2, 0} \subseteq \IG(1, 4)$ and finally that $\sigma_{n - 1, n - 5} \simeq \IP^4$.
\end{rem}

These Schubert cycles intersect according to Table \ref{tab: intersection g(1, n)}.

\begin{table}[h]
\begin{tabular}{||c||c|c|c||}
\hline
\hline
$\cdot$ & $\sigma_{4, 0}$ & $\sigma_{3, 1}$ & $\sigma_{2, 2}$\\
\hline
\hline
$\sigma_{n - 1, n - 5}$ & $1$ & $0$ & $0$\\
\hline
$\sigma_{n - 2, n - 4}$ & $0$ & $1$ & $0$\\
\hline
$\sigma_{n - 3, n - 3}$ & $0$ & $0$ & $1$\\
\hline
\hline
\end{tabular}
\caption{Intersection table of the Schubert cycles of dimension and of codimension $4$ in $\IG(1, n)$.}
\label{tab: intersection g(1, n)}
\end{table}

Since $h$ is at least $2$-very ample on $S$, we can write
\begin{equation}\label{eq: decomp in g(1, n)}
[\Sigma(S)] = \alpha \sigma_{n - 1, n - 5} + \beta \sigma_{n - 2, n - 4} + \gamma \sigma_{n - 3, n - 3},
\end{equation}
and we want to compute the coefficients $\alpha$, $\beta$ and $\gamma$. Thanks to table \ref{tab: intersection g(1, n)}, we have that
\[\alpha = [\Sigma(S)] \cdot \sigma_{4, 0}, \qquad \beta = [\Sigma(S)] \cdot \sigma_{3, 1}, \qquad \gamma = [\Sigma(S)] \cdot \sigma_{2, 2}.\]
By the $2$-very-ampleness of $h$ (and Remark \ref{rem: geom descr}), a line cutting $S$ in $2$ points can not have further intersections with $S$, and so we can characterize $\Sigma(S) \subseteq \IG(1, n)$ as
\begin{equation}\label{eq: sigma s in grass}
\Sigma(S) = \set{l \in \IG(1, n) \st \operatorname{length} (l \cap S) = 2}.
\end{equation}

\subsection{Enumerative meaning of the coefficients}

In this Subsection we give an enumerative meaning of the coefficients $\alpha$, $\beta$ and $\gamma$ introduced in \eqref{eq: decomp in g(1, n)}. We compute two of them in this Section in terms of numerical properties of the surface, and compute the last one in Section \ref{sect: deg sec s, deg tan s}.

We start by determining $\gamma$. Since $\gamma = [\Sigma(S)] \cdot \sigma_{2, 2}$, by \eqref{eq: sigma s in grass} it coincides with the number of lines in $\IP^n$ which are contained in a given $\Lambda_{n - 2}$ and meet $S$ in $2$ points. Now, a generic $\Lambda_{n - 2}$ cuts $d = h^2$ distinct points on $S$, and so we have at most $\binom{d}{2}$ lines. Since $h$ is at least $2$-very ample, no three of those points lie on the same line (cf. Remark \ref{rem: geom descr}), hence this gives a proof of the following Lemma.

\begin{lemma}\label{lemma: gamma}
The value of the coefficient $\gamma$ in \eqref{eq: decomp in g(1, n)} is
\[\gamma= \frac{1}{2} h^2 (h^2 - 1).\]
\end{lemma}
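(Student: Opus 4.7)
The plan is to translate the intersection product $\gamma = \Sigma(S) \cdot \sigma_{2,2}$ into an enumerative count via the description of $\Sigma(S)$ given in \eqref{eq: sigma s in grass}, and then to use $2$-very-ampleness (in the form of Remark \ref{rem: geom descr}) to show that this count equals $\binom{h^2}{2}$.

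First I would fix a generic $\Lambda_{n - 2} \subseteq \IP^n$ so that its associated Schubert cycle is $\sigma_{2, 2}$, and look at $Z \coloneqq S \cap \Lambda_{n - 2}$. Since $\Lambda_{n - 2}$ is cut by two general hyperplanes, by Bertini applied to the very ample divisor $h$ the scheme $Z$ is a reduced $0$-dimensional subscheme of $S$ of length $h^2$, consisting of $d \coloneqq h^2$ distinct points $P_1, \ldots, P_d$.

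Next I would identify the intersection points of $\Sigma(S)$ and $\sigma_{2, 2}$ in $\IG(1, n)$: by \eqref{eq: sigma s in grass} a point of $\Sigma(S) \cap \sigma_{2, 2}$ is a line $l \subseteq \Lambda_{n - 2}$ with $\operatorname{length}(l \cap S) = 2$, and since $l \cap S \subseteq Z$ is a length-$2$ subscheme of the finite set $Z$, such a line must be the line through two of the $P_i$'s. Conversely any line $\langle P_i, P_j \rangle$ with $i \neq j$ lies in $\Lambda_{n - 2}$ and is a secant, so the intersection points are in bijection with the pairs $\{P_i, P_j\}$. This gives at most $\binom{d}{2}$ lines.

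The only thing that could go wrong is that different pairs produce the same line, which would happen exactly if three of the $P_i$ were collinear; but by Remark \ref{rem: geom descr}(1) a secant line to $S$ meets $S$ in a scheme of length exactly $2$, so this cannot occur. For transversality, a standard dimension count shows that the Schubert intersection is transverse for generic $\Lambda_{n - 2}$ (so every contributing line is counted with multiplicity one). Combining, $\gamma = \binom{d}{2} = \tfrac{1}{2}h^2(h^2 - 1)$. The only subtle point in this plan is verifying transversality and the genericity statements on $\Lambda_{n - 2}$; the rest is a direct translation of the intersection-theoretic definition into the geometric count.
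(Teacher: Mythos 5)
Your proposal is correct and follows essentially the same route as the paper: interpret $\gamma = \Sigma(S)\cdot\sigma_{2,2}$ as the number of secant lines contained in a generic $\Lambda_{n-2}$, note that such a $\Lambda_{n-2}$ cuts $d = h^2$ distinct points on $S$, and use $2$-very-ampleness (via Remark \ref{rem: geom descr}) to rule out three collinear points, giving exactly $\binom{d}{2}$ lines. Your extra remark about transversality of the intersection is a reasonable point of care that the paper leaves implicit.
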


Now we determine $\beta$.

\begin{rem}\label{rem: jacobian criterion}
In this Remark we explain an easy consequence of the Jacobian criterion for smoothness, which will be used in the proof of Lemma \ref{lemma: beta} and Lemma \ref{lemma: sing curve in pencil}. Let $S \subseteq \IP^n$ be a smooth variety and let $H$ be a smooth hypersurface not containing $S$. Let $P \in S \cap H$: choosing a set of (local) equations $f_1, \ldots, f_m$ for $S$ and an equation $f$ for $H$ which are centred in $P$, we have that $P$ is singular for $S \cap H$ if and only if
\[\rk \left( \begin{array}{ccc}
\frac{\partial f}{\partial x_0}_{|_P} & \cdots & \frac{\partial f}{\partial x_n}_{|_P}\\
\hline
\frac{\partial f_1}{\partial x_0}_{|_P} & \cdots & \frac{\partial f_1}{\partial x_n}_{|_P}\\
\vdots & \ddots & \vdots\\
\frac{\partial f_m}{\partial x_0}_{|_P} & \cdots & \frac{\partial f_m}{\partial x_n}_{|_P}
\end{array} \right) < n - \dim(S \cap H) = n - \dim S + 1.\]
As the submatrix $\left( \frac{\partial f_i}{\partial x_j}_{|_P} \right)_{\renewcommand\arraystretch{0.5} \begin{array}{l}
\scriptscriptstyle 1 \leq i \leq m\\
\scriptscriptstyle 0 \leq j \leq n
\end{array}}$ has rank $n - \dim S$ since $P$ is a smooth point, hence the condition that $P$ is singular for $S \cap H$ becomes equivalent to $T_P S \subseteq T_P H$. In the following, we will use this observation when $S$ is a smooth non-degenerate surface and $H$ a is hyperplane.
\end{rem}

Since $\beta = [\Sigma(S)] \cdot \sigma_{3, 1}$, by \eqref{eq: sigma s in grass} it coincides with the number of lines in $\IP^n$ which are contained in a given $\Lambda_{n - 1}$, meet a given $\Lambda_{n - 4} \subseteq \Lambda_{n - 1}$ and cut $S$ in $2$ points.

\begin{lemma}\label{lemma: beta}
Let $S \subseteq \IP^n$ be a surface embedded by a $3$-very ample divisor $h$. Let $C = S \cap \Lambda_{n - 1}$ be an irreducible smooth hyperplane section of $S$. Then the coefficient $\beta$ in \eqref{eq: decomp in g(1, n)} coincides with the degree of the secant variety of $C$ in $\Lambda_{n - 1}$, and its value is
\[\beta = \frac{1}{2}(h^2 (h^2 - 4) - h \cdot K_S).\]
\end{lemma}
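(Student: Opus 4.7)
\medskip

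\noindent\textbf{Proof plan.} The first step is to translate the intersection number $\beta = \Sigma(S)\cdot\sigma_{3,1}$ into an enumerative problem inside the hyperplane $\Lambda_{n-1}$. By \eqref{eq: sigma s in grass}, $\beta$ counts lines $\ell\subseteq\IP^n$ that lie in a fixed $\Lambda_{n-1}$, meet a fixed $\Lambda_{n-4}\subseteq\Lambda_{n-1}$, and satisfy $\operatorname{length}(\ell\cap S)=2$. Because such an $\ell$ is contained in $\Lambda_{n-1}$, it can meet $S$ only along $C=S\cap\Lambda_{n-1}$; conversely any secant line to $C$ is automatically a secant line to $S$ contained in $\Lambda_{n-1}$. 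Hence $\beta$ is the number of secant lines to $C\subseteq\Lambda_{n-1}\cong\IP^{n-1}$ meeting the generic codimension-$3$ linear subspace $\Lambda_{n-4}$.

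Next I would identify this number with $\deg\Sec C$ in $\Lambda_{n-1}$. Since $\Sec C$ is a threefold and $\Lambda_{n-4}$ has codimension $3$, the intersection $\Sec C\cap\Lambda_{n-4}$ is $0$-dimensional of length $\deg\Sec C$. The $2$-very ampleness of $h$ (via Remark \ref{rem: geom descr}) guarantees that no three points of $S$ are collinear, so a generic point of $\Sec C$ lies on a unique secant of $C$; this gives the bijection between $\Sec C\cap\Lambda_{n-4}$ and the set of secants of $C$ meeting $\Lambda_{n-4}$. Therefore $\beta=\deg\Sec C$.

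Now I need to compute $\deg\Sec C$ for the smooth irreducible curve $C\subset\IP^{n-1}$. The cleanest route is the classical projection argument: project $C$ from the generic $\Lambda_{n-4}$ onto a generic $\IP^2$. The image is a plane curve of the same degree $d_C = h^2$, with only nodes as singularities, one node for each secant of $C$ through $\Lambda_{n-4}$; writing $\delta$ for the number of nodes, the genus formula for a nodal plane curve gives
\[
g_C=\binom{d_C-1}{2}-\delta,\qquad\text{so}\qquad \deg\Sec C=\delta=\binom{h^2-1}{2}-g_C.
\]
By adjunction on $S$, $2g_C-2=C\cdot(C+K_S)=h^2+h\cdot K_S$, hence $g_C=1+\tfrac12(h^2+h\cdot K_S)$. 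Substituting and simplifying yields
\[
\beta=\frac{(h^2-1)(h^2-2)}{2}-1-\frac{h^2+h\cdot K_S}{2}=\frac{h^2(h^2-4)-h\cdot K_S}{2},
\]
as claimed.

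The main obstacle is the passage from the Schubert-theoretic count to $\deg\Sec C$: one must rule out that $\Sigma(C)\to\Sec C$ has degree $>1$ (which follows from $2$-very ampleness of $h$) and verify that the generic projection to $\IP^2$ introduces only nodes (standard genericity, since $C$ is smooth and non-degenerate in $\Lambda_{n-1}$); everything else is adjunction and arithmetic.
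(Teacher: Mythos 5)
Your proof is correct and follows essentially the same route as the paper: both reduce $\beta$ to the degree of $\Sec C$ for a smooth hyperplane section $C$ (ruling out tangent lines of $S$ along $C$ that are not tangent to $C$) and then apply $\deg \Sec C = \binom{d - 1}{2} - g$ together with adjunction. The only difference is that you rederive that classical formula by generic projection to $\IP^2$ and counting nodes, whereas the paper simply cites it.
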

\begin{proof}
For a generic choice of $\Lambda_{n - 1} \subseteq \IP^n$, the curve $C = S \cap \Lambda_{n - 1}$ is an irreducible and smooth curve of genus
\[g(C) = 1 + \frac{1}{2} h \cdot (K_S + h),\]
where $K_S$ is a canonical divisor on $S$, and its degree in $\Lambda_{n - 1}$ is $d = h^2$.

A line $l$ contained in $\Lambda_{n - 1}$ and meeting $S$ in a length $2$ subscheme can be of the following types:
\begin{enumerate}
\item $l$ is a secant of $C$;
\item $l$ is a tangent of $C$;
\item $l$ is tangent to $S$ in a point $P$ of $C$, but $l$ is not the tangent of $C$ at $P$.
\end{enumerate}
Assume we are in the third case: then $\Lambda_{n - 1}$ contains $l$ and the tangent line $T_P C$ to $C$ at $P$, since $C$ is smooth. Then $\Lambda_{n - 1}$ contains the linear subspace generated by these two lines in $\Lambda_{n - 1}$, i.e., $T_P S$, and by Remark \ref{rem: jacobian criterion} this implies that $C$ is singular at $P$, which is a contradiction. Hence a line $l \subseteq \Lambda_{n - 1}$ meeting $S$ in $2$ points is of the first or second kind. Then $l$ is contained in the secant variety of $C$ in $\Lambda_{n - 1}$.

The secant variety $\Sec C$ is a threefold contained in $\Lambda_{n - 1}$, and so $\beta$ can as well be computed as the degree of $\Sec C$ in $\Lambda_{n - 1}$: a generic linear subspace of codimension $3$ meets this secant variety in a point which is not on $C$, and such a point uniquely determines a secant line by the $3$-very ampleness of $h$ by Remark \ref{rem: geom descr}. By \cite[Theorem 4.3]{dale} or \cite[Theorem 3.5]{ac-man-sch}, the degree of the secant variety to a smooth curve of genus $g$ and degree $d$ is
\[\deg \Sec C = \binom{d - 1}{2} - g,\]
which in our case says that
\[\beta = \frac{1}{2}(h^2 (h^2 - 4) - h \cdot K_S).\]
\end{proof}

The proof of Lemma \ref{lemma: beta} can be adapted to show that the coefficient $\alpha$ in \eqref{eq: decomp in g(1, n)} is the degree of the secant variety to $S$ in $\IP^n$.

\begin{lemma}\label{lemma: alpha}
Let $S \subseteq \IP^n$ be a surface embedded by a $3$-very ample divisor $h$. The coefficient $\alpha$ in \eqref{eq: decomp in g(1, n)} coincides with the degree of the secant variety of $S$.
\end{lemma}
\begin{proof}
Since $\alpha = [\Sigma(S)] \cdot \sigma_{4, 0}$, by \eqref{eq: sigma s in grass} it coincides with the number of lines in $\IP^n$ which meet a fixed $\Lambda_{n - 5}$ and cut $S$ in $2$ points. The secant variety of $S$ is $5$-dimensional, so a generic linear subspace of codimension $5$ cuts $\Sec S$ in $\deg \Sec S$ distinct points which do not lie on $S$. By the $3$-very ampleness of $h$ and Remark \ref{rem: geom descr}, each such point determines uniquely a secant line to $S$ and the Lemma follows.
\end{proof}

The degree of $\Sigma(S)$ in $\IP^{\frac{(n + 2)(n - 1)}{2}}$ under the Pl\"ucker embedding $P$ can easily be computed in terms of the coefficients $\alpha$, $\beta$ and $\gamma$. As the restriction of the hyperplane class to the Grassmannian is the Schubert cycle $\sigma_{1, 0}$, this degree coincides with
\[\deg (\sigma_{1, 0}^4 \cdot [\Sigma(S)]).\]
Using Pieri formula, we can compute that $\sigma_{1, 0}^4 = \sigma_{4, 0} + 3 \sigma_{3, 1} + 2 \sigma_{2, 2}$, and
\begin{equation}\label{eq: degree embedding}
\deg (\sigma_{1, 0}^4 \cdot [\Sigma(S)]) = \alpha + 3 \beta + 2 \gamma.
\end{equation}

To determine the value of $\alpha$ is then the same as to determine the degree of $\sigma_{1, 0}^4 \cdot [\Sigma(S)]$.

\section{The tangent variety}\label{sect: tangent variety general}

In the same spirit we defined the secant variety to a surface $S$ embedded in $\IP^n$, we can define the \emph{tangent variety} to $S$ in the following way. We consider the Gauss map
\[\begin{array}{rccc}
T: & S & \longrightarrow & \IG(2, n)\\
 & P & \longmapsto & T_P S,
\end{array}\]
and the incidence relation
\[I' = \set{(x, \pi) \in \IP^n \times \IG(2, n) \st x \in \pi} \subseteq \IP^n \times \IG(2, n).\]
We denote $\cT(S) = \im T \subseteq \IG(2, n)$, and then define
\[\Tan S = pr_1(pr_2^{-1}(\cT(S))) \subseteq \IP^n.\]
Observe that we can describe $\Tan S$ as well as the variety given by the union of all the (embedded) tangent planes to $S$:
\[\Tan S = \set{x \in \IP^n \st x \in T_P S \text{ for some } P \in S}.\]

\begin{rem}
Let $S$ be a surface embedded in $\IP^n$ by means of the very ample divisor $h$. As a consequence of Remark \ref{rem: geom descr}, we have that if $H$ is $2$-very ample then $T$ is injective.
\end{rem}

\subsection{The variety of tangent planes in the Grassmannian}
Assume from now on that the embedding of $S$ in $\IP^n$ is induced by a $2$-very ample divisor $h$. We want to describe $\cT(S)$ in terms of the $2$-dimensional Schubert cycles of $\IG(2, n) = \Grass(3, n + 1)$.

We give a brief description of the Schubert cycles involved. In codimension $1$ we have the cycle $\sigma_{1, 0, 0}$, representing the set of all the planes in $\IP^n$ intersecting a given $\Lambda_{n - 3}$. The map induced by $\sigma_{1, 0, 0}$ is the Pl\"ucker embedding in $\IP^{N - 1}$ (with $N = \binom{n + 1}{3}$).

In codimension $2$ we have the cycles
\begin{enumerate}
\item $\sigma_{2, 0, 0}$ parametrising all the planes of $\IP^n$ which intersect a given $\Lambda_{n - 4}$;
\item $\sigma_{1, 1, 0}$ parametrising all the planes of $\IP^n$ which intersect a given $\Lambda_{n - 2}$ in (at least) a line.
\end{enumerate}
Finally, in dimension $2$ we have the cycles
\begin{enumerate}
\item $\sigma_{n - 2, n - 2, n - 4}$ parametrising all the planes of $\IP^n$ which are contained in a given $\Lambda_4$ and contain a given line $\Lambda_1 \subseteq \Lambda_4$;
\item $\sigma_{n - 2, n - 3, n - 3}$ parametrising all the planes of $\IP^n$ which are contained in a given $\Lambda_3$ and pass through a given point $P \in \Lambda_3$.
\end{enumerate}

\begin{rem}
Observe that $\sigma_{n - 2, n - 2, n - 4}$ is isomorphic to the Schubert cycle $\sigma_{2, 2, 0} \subseteq \IG(2, 4)$, and that $\sigma_{n - 2, n - 3, n - 3} \simeq \IP^2$.
\end{rem}

The intersection table among these cycles is given in table \ref{tab: intersection g(2, n)}.

\begin{table}[ht]
\begin{tabular}{||c||c|c||}
\hline
\hline
$\cdot$ & $\sigma_{2, 0, 0}$ & $\sigma_{1, 1, 0}$\\
\hline
\hline
$\sigma_{n - 2, n - 2, n - 4}$ & $1$ & $0$\\
\hline
$\sigma_{n - 2, n - 3, n - 3}$ & $0$ & $1$\\
\hline
\hline
\end{tabular}
\caption{Intersection table of the Schubert cycles in $\IG(2, n)$.}
\label{tab: intersection g(2, n)}
\end{table}

We can write
\begin{equation}\label{eq: decomp in g(2, n)}
[\cT(S)] = \alpha' \sigma_{n - 2, n - 2, n - 4} + \beta' \sigma_{n - 2, n - 3, n - 3},
\end{equation}
and we want now to determine the values of $\alpha'$ and $\beta'$.

\subsection{Enumerative meaning of the coefficients}
As we have the description
\begin{equation}\label{eq: t s in grass}
\cT(S) = \set{\pi \in \IG(2, n) \st \pi \text{ is tangent to } S},
\end{equation}
we can give an enumerative meaning to the coefficients $\alpha'$ and $\beta'$ in \eqref{eq: decomp in g(2, n)}.

We start with $\beta' = [\cT(S)] \cdot \sigma_{1, 1, 0}$, which by \eqref{eq: t s in grass} corresponds to the number of tangent planes to $S$ intersecting an $(n - 2)$-dimensional linear subspace in at least one line.

Let $\Lambda_{n - 2} = \Lambda_{n - 1} \cap \Lambda_{n - 1}'$, and call $C = S \cap \Lambda_{n - 1}$ and $C' = S \cap \Lambda_{n - 1}'$. For a generic choice of $\Lambda_{n - 1}$, $\Lambda_{n - 1}'$ we have that $C$ and $C'$ are smooth curves meeting transversely in $d = h^2$ distinct points. Let $P$ be one of these points: then the lines $T_P S \cap \Lambda_{n - 1}$ and $T_P S \cap \Lambda_{n - 1}'$ are distinct and meet only at $P$. So $T_P S \cap \Lambda_{n - 1} \cap \Lambda'_{n - 1} = \{ P \}$, and this means that no line contained in $T_P S$ can be contained in $\Lambda_{n - 2}$. As a consequence, if we have a line contained in $T_P S \cap \Lambda_{n - 2}$, we can assume that $P \in S \smallsetminus \Lambda_{n - 2}$.

\begin{lemma}\label{lemma: sing curve in pencil}
The point $P \in S \smallsetminus \Lambda_{n - 2}$ is such that $T_P S \cap \Lambda_{n - 2}$ is a line if and only if there is a curve $\Gamma$ in the pencil generated by $C$ and $C'$ which is singular at $P$.
\end{lemma}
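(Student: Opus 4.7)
The plan is to localize the analysis around $P$ by singling out the unique member of the pencil through $P$, and then to do a simple dimension count using the flag $T_P S \cap \Lambda_{n-2} \subseteq T_P S \cap \Lambda_P \subseteq T_P S$, where $\Lambda_P$ is the hyperplane of the pencil passing through $P$.

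First, since $P \in S \smallsetminus \Lambda_{n-2}$, the pencil of hyperplanes through $\Lambda_{n-2}$ contains a \emph{unique} hyperplane $\Lambda_P$ with $P \in \Lambda_P$, and correspondingly $\Gamma_P \coloneqq S \cap \Lambda_P$ is the unique curve of the pencil through $P$. I would then invoke the scheme-theoretic identity
\[
T_P \Gamma_P = T_P S \cap T_P \Lambda_P = T_P S \cap \Lambda_P,
\]
valid because $\Lambda_P$ is linear and $\Gamma_P$ is a complete intersection of $S$ with $\Lambda_P$. The crucial dichotomy is then: $\Gamma_P$ is smooth at $P$ exactly when this intersection is $1$-dimensional (a line equal to $T_P\Gamma_P$), while $\Gamma_P$ is singular at $P$ exactly when $T_PS \subseteq \Lambda_P$, i.e.\ when $T_P S \cap \Lambda_P = T_P S$ has dimension $2$.

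Next I would exploit the inclusion $\Lambda_{n-2} \subseteq \Lambda_P$ to write
\[
T_P S \cap \Lambda_{n-2} = \pa{T_P S \cap \Lambda_P} \cap \Lambda_{n-2},
\]
so in both cases above we are intersecting a linear subspace of $\Lambda_P$ containing $P$ with the hyperplane $\Lambda_{n-2}$ of $\Lambda_P$. Since $P \notin \Lambda_{n-2}$, in neither case can the ambient space be contained in $\Lambda_{n-2}$, so the intersection always drops dimension by exactly one. In the smooth case the line $T_P \Gamma_P$ meets $\Lambda_{n-2}$ in a single point; in the singular case the $2$-plane $T_P S$ meets $\Lambda_{n-2}$ in a line. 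This gives both implications of the equivalence.

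I do not expect any real obstacle here: the argument is pure linear algebra once one identifies $\Lambda_P$ and the tangent spaces correctly. The only step that deserves a line of justification is the characterization of singularity of $\Gamma_P$ at $P$ in terms of $T_P S \subseteq \Lambda_P$, which follows from the fact that a hyperplane section of a smooth surface is singular at $P$ precisely when the hyperplane contains the tangent plane $T_P S$.
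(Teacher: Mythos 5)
Your proof is correct and follows essentially the same route as the paper's: both identify the unique hyperplane of the pencil through $P$, reduce the singularity of the corresponding curve at $P$ to the containment $T_P S \subseteq \Lambda_P$, and finish with linear algebra using $P \notin \Lambda_{n-2}$. The only difference is cosmetic --- you package the two implications into a single dimension count along the flag $T_P S \cap \Lambda_{n-2} \subseteq T_P S \cap \Lambda_P \subseteq T_P S$, whereas the paper argues the two directions separately (spanning $T_PS$ by $P$ and the line $l$ in one direction, Grassmann's formula in the other).
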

\begin{proof}
Let $P$ be a point such that $T_P S \cap \Lambda_{n - 2}$ contains a line $l$. Since $P \notin \Lambda_{n - 2}$, there exists a unique hyperplane $\cH$ in the pencil of hyperplanes through $\Lambda_{n - 2}$ passing through $P$. Let $\Gamma = S \cap \cH$, then $\Gamma$ is a curve in the pencil generated by $C$ and $C'$, and since $l \subseteq \cH$ and $P \in \cH$ we deduce that $T_P S \subseteq \cH$ which implies that $\Gamma$ is singular at $P$ (see Remark \ref{rem: jacobian criterion}).

Viceversa, let $\Gamma$ be a curve in the pencil generated by $C$ and $C'$ which is singular at $P$. Then $\Gamma$ is the intersection of $S$ with a hyperplane $\cH$ in the pencil of hyperplanes through $\Lambda_{n - 2}$, and $T_P S \subseteq \cH$ since $\Gamma$ is singular at $P$. But then $T_P S \cap \Lambda_{n - 2} = T_P S \cap \cH \cap \Lambda_{n - 1} = T_P S \cap \Lambda_{n - 1}$ is a line by the Grassmann formula.
\end{proof}

We can then conclude that
\[\beta' = \sum_{C \in \text{Pencil in } |h|} \# (\Sing C),\]
which is a number we can determine.

\begin{lemma}\label{lemma: beta'}
The value of $\beta'$ in \eqref{eq: decomp in g(2, n)} is
\[\beta' = \chitop(S) + h \cdot (2K_S + 3h).\]
\end{lemma}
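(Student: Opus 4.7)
The plan is to interpret $\beta'$ as the number of singular fibers in a Lefschetz pencil and apply the topological Euler characteristic formula for fibrations.

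First I would observe that by the lemma just proved, $\beta'$ counts, with multiplicity, the singular points appearing in the members of the pencil generated by $C$ and $C'$. For a generic pencil in $|h|$ on the smooth projective surface $S$, one may invoke the standard Lefschetz pencil argument (Bertini-type, together with $2$-very-ampleness guaranteeing that $|h|$ separates length-$2$ subschemes so in particular tangent directions): all but finitely many members of the pencil are smooth, and each singular member has a single node. Hence $\beta'$ equals the number of singular members of the pencil.

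Next I would resolve the base locus. Since $C$ and $C'$ are generic smooth curves in $|h|$, they meet transversally in $h^2 = d$ distinct points. Let $\pi\colon \widetilde{S} \to S$ be the blow up at these $d$ base points; the pencil then becomes a morphism $f \colon \widetilde{S} \to \IP^1$ whose generic fibre $F$ is isomorphic to a smooth curve in $|h|$, of genus
\[
g(F) = 1 + \tfrac{1}{2} h \cdot (K_S + h),
\qquad \chitop(F) = -h\cdot(K_S + h).
\]
Blowing up $d$ points on $S$ contributes $+d = +h^2$ to the topological Euler characteristic, so $\chitop(\widetilde{S}) = \chitop(S) + h^2$.

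The key step is now the Euler characteristic formula for a fibration with only nodal singular fibres: if $N$ is the number of singular fibres, each nodal fibre $F_i$ satisfies $\chitop(F_i) = \chitop(F) + 1$, and therefore
\[
\chitop(\widetilde{S}) \;=\; \chitop(\IP^1)\cdot\chitop(F) + N \;=\; 2\chitop(F) + N.
\]
Substituting $N = \beta'$, $\chitop(\widetilde{S}) = \chitop(S)+h^2$, and $\chitop(F) = -h\cdot(K_S+h)$ gives
\[
\chitop(S) + h^2 \;=\; -2\,h\cdot(K_S + h) + \beta',
\]
and solving for $\beta'$ yields $\beta' = \chitop(S) + h\cdot(2K_S + 3h)$, as claimed.

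The only delicate point is justifying that a generic pencil in $|h|$ is truly a Lefschetz pencil, i.e.\ that the singular members have exactly one ordinary double point each and no worse degenerations occur. I would argue this by a standard incidence/dimension count on the linear system $|h|$, using that the hyperplane class is at least very ample so that smooth members exist and the locus of members with a non-nodal singularity has codimension $\geq 2$ in $|h|$; then a general pencil avoids it. Every other step is a routine computation with the genus formula and Noether-type considerations for the blow up.
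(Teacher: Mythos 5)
Your argument is correct and is essentially identical to the paper's own proof: both resolve the base locus of a generic pencil in $|h|$ by blowing up the $h^2$ base points, note that the generic singular member is one-nodal so each singular fibre contributes $+1$ to the Euler characteristic, and solve $\chitop(S) + h^2 = 2\chitop(F) + \beta'$ for $\beta'$. The only cosmetic difference is that the paper writes the fibration formula as $\chitop(\IP^1 \smallsetminus \Delta)\cdot\chitop(F) + \chitop(\Delta)\cdot\chitop(F_{\Sing})$ before simplifying to your $2\chitop(F) + N$.
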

\begin{proof}
The surface $S$ is embedded in $\IP^n$ by means of the $2$-very ample divisor $h$, hence for a pencil of curves in $|h|$ the generic curve $C$ is smooth of genus $1 + \frac{1}{2} h \cdot (K_S + h)$ and its topological Euler characteristic is
\[\chitop(C) = -h \cdot (K_S + h).\]
The generic singular curve has only one node, and so its topological Euler characteristic is $-h \cdot (K_S + h) + 1$. Finally, the base points of the generic pencil are $d = h^2$ distinct points. We blow up $S$ in these points to find a surface $\tilde{S}$ having a fibration $\tilde{S} \longrightarrow \IP^1$ induced by the pencil. In this setting $\beta'$ corresponds to the number of singular fibres, i.e., to the degree of the discriminant locus of the fibration: we can compute this degree by means of topological methods. Denote by $F$ the generic (smooth) fibre of the fibration, by $\Delta \subseteq \IP^1$ the discriminant locus and by $F_{\Sing}$ the singular fibre over the points of $\Delta$. Choosing the pencil generically, $\Delta$ consists of $\beta'$ distinct points and $F_{\Sing}$ has only one node. Since $\tilde{S}$ is obtained from the surface $S$ after the blow up of $d$ points, we have that $\chitop(\tilde{S}) = \chitop(S) + d$. But then:
\[\begin{array}{rl}
\chitop(S) + d = & \chitop(\tilde{S}) =\\
= & \chitop(\IP^1 \smallsetminus \Delta) \cdot \chitop(F) + \chitop(\Delta) \cdot \chitop(F_{\Sing}) =\\
= & (2 - \beta')(-h \cdot (K_S + h)) + \beta' (-h \cdot (K_S + h) + 1),
\end{array}\]
and this allows us to conclude that $\beta' = \chitop(S) + h^2 + 2 h \cdot (K_S + h)$, which proves the Lemma.
\end{proof}

Now we focus on $\alpha' = [\cT(S)] \cdot \sigma_{2, 0, 0}$. By \eqref{eq: t s in grass} its value corresponds to the number of tangent planes to $S$ meeting a given $(n - 4)$-dimensional linear subspace $\Lambda_{n - 4}$ of $\IP^n$. Such a $\Lambda_{n - 4}$ is the intersection of $4$ hyperplanes, say $\Lambda_{n - 1}$, $\Lambda_{n - 1}'$, $\Lambda_{n - 1}''$ and $\Lambda_{n - 1}'''$: it is not restrictive to assume that each of them cuts $S$ in a smooth curve and that $\Lambda_{n - 4} \cap S = \varnothing$.

\begin{rem}
Under the assumption that $C = S \cap \Lambda_{n - 1}$ is smooth, we have that $\Lambda_{n - 1}$ intersects any tangent space $T_P S$ in a line. In fact, by the Grassmann formula we have $\dim (\Lambda_{n - 1} \cap T_P S) \geq 1$, and finally that $\dim (\Lambda_{n - 1} \cap T_P S) = 2$ if and only if $T_P S \subseteq \Lambda_{n - 1}$ which happens if and only if $C$ is singular at $P$ (see Remark \ref{rem: jacobian criterion}). Moreover, this line is the tangent line to $C$ in $P$.
\end{rem}

\begin{lemma}\label{lemma: from g(2, n) to g(1, n)}
Let $S \subseteq \IP^n$ be a surface embedded by a $3$-very ample divisor $h$. The number $\alpha'$ in \eqref{eq: decomp in g(2, n)} is the degree of $\Tan S$ in $\IP^n$, and it coincides with the number of tangent lines to $S$ which intersect a given $\Lambda_{n - 4} \subseteq \IP^n$.
\end{lemma}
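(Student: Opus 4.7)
The plan is to exploit the hypothesis that $h$ is $3$-very ample via Remark \ref{rem: geom descr}(2): distinct tangent planes to $S$ are disjoint. Once this is in hand, the two quantities in the statement are counted by the same set of points, and the identification is essentially a bookkeeping exercise on the definition $\alpha' = \cT(S) \cdot \sigma_{2, 0, 0}$.

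First, I would unwind the definitions. By \eqref{eq: t s in grass} and the description of $\sigma_{2, 0, 0}$, the intersection number $\alpha'$ counts the tangent planes $T_P S$ which meet a generic $\Lambda_{n - 4} \subseteq \IP^n$. For a generic such $\Lambda_{n - 4}$, disjoint from $S$, the intersection $T_P S \cap \Lambda_{n - 4}$ is either empty or a single reduced point, since $2 + (n - 4) - n = -2$ and the locus of tangent planes meeting $\Lambda_{n - 4}$ has codimension $2$ in $\cT(S)$, which is $0$-dimensional.

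Second, I would set up the bijection that identifies the three quantities. Given $P \in S$ with $T_P S \cap \Lambda_{n - 4} = \{x\}$, the line $\overline{Px}$ is a tangent line to $S$ at $P$ which meets $\Lambda_{n - 4}$. Conversely, any tangent line $l$ to $S$ at some point $P$ meeting $\Lambda_{n - 4}$ is contained in $T_P S$, so its intersection with $\Lambda_{n - 4}$ is also an intersection point of $T_P S$ with $\Lambda_{n - 4}$. This gives a bijection
\[\set{P \in S \,|\, T_P S \cap \Lambda_{n - 4} \neq \varnothing} \longleftrightarrow \set{\text{tangent lines to } S \text{ meeting } \Lambda_{n - 4}},\]
so the right-hand side has cardinality $\alpha'$.

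Third, I would identify this common count with $\deg \Tan S$. Since $\Tan S$ is a $4$-dimensional subvariety of $\IP^n$, its degree equals $\#(\Tan S \cap \Lambda_{n - 4})$ for a generic $\Lambda_{n - 4}$. Any point of $\Tan S \cap \Lambda_{n - 4}$ lies on some tangent plane $T_P S$; here is where the $3$-very ampleness of $h$ enters: by Remark \ref{rem: geom descr}(2), distinct tangent planes are disjoint, so the tangent plane $T_P S$ containing a given $x \in \Tan S$ is unique. Hence
\[\Tan S \cap \Lambda_{n - 4} = \bigsqcup_{P : T_P S \cap \Lambda_{n - 4} \neq \varnothing} T_P S \cap \Lambda_{n - 4},\]
and each intersection on the right is a single reduced point. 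Counting gives $\deg \Tan S = \alpha'$, completing the proof.

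The only nontrivial ingredient is the disjointness of tangent planes, which would fail in the merely $2$-very ample setting (two distinct tangent planes could meet away from $S$, producing an overcount in $\Tan S \cap \Lambda_{n - 4}$). This is why the hypothesis is strengthened from $2$-very ample to $3$-very ample; the rest is dimension-counting and is routine.
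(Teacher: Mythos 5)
Your proof is correct and rests on exactly the same key ingredient as the paper's: the $3$-very ampleness forces distinct tangent planes to be disjoint (Remark \ref{rem: geom descr}(2)), so a generic point of $\Tan S$ determines a unique tangent plane, a unique tangent line, and a unique non-reduced length-$2$ subscheme. The only difference is organizational --- you intersect directly with $\Lambda_{n-4}$, while the paper first cuts with $\Lambda_{n-2} = \Lambda_{n-1}\cap\Lambda_{n-1}'$ and analyses the lines $T_QS\cap\Lambda_{n-1}$ and $T_QS\cap\Lambda_{n-1}'$ before cutting twice more --- so this is essentially the same argument, if anything more streamlined.
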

\begin{proof}
For $Q \in S$, we write $l_Q = T_Q S \cap \Lambda_{n - 1}$ (and analogously define $l'_Q$ and so on). Let $\{ P \} = l_Q \cap l'_Q$, we have the following possibilities:
\begin{enumerate}
\item $P = Q$ is one of the $d$ points where $\Lambda_{n - 1} \cap \Lambda_{n - 1}'$ meets $S$.
\item $Q$ is one of the finite number of points which determine a tangent space $T_Q S$ where $l_Q = l'_Q$. It is easy to see that this number is finite since each such $Q$ determines a tangent plane meeting a $\Lambda_{n - 2}$ in at least a line, and we know that there are at most $\beta'$ such planes.
\item $l_Q \neq l'_Q$ and $P \neq Q$. This is the generic situation, and we observe that in this case the point $P$ determines uniquely three data: the tangent plane it belongs to (here is where the $3$-very ampleness come into play, in view of Remark \ref{rem: geom descr}), a tangent line in this tangent plane (the line through $P$ and $Q$), and a length $2$ non-reduced subscheme of $S$ (obtained as the intersection of that tangent line with $S$).
\end{enumerate}

This shows that the intersection of $\Lambda_{n - 1} \cap \Lambda_{n - 1}'$ with $\Tan S$ is a surface, with the property that all but a finite number of its points determine uniquely a tangent plane, a tangent line and a non-reduced length $2$ subscheme. When we intersect again $\Tan S$ with $\Lambda_{n - 1}''$ and $\Lambda_{n - 1}'''$ we get on this surface a finite number of distinct points (this number equals the degree the surface in $\Lambda_{n - 2}$) and so the number of tangent panes to $S$ which intersect a given $\Lambda_{n - 4}$ is the same as the degree of $\Tan S$ as well as the number of tangent lines to $S$ which intersect that given $\Lambda_{n - 4}$.
\end{proof}

\subsection{Reduction to the Grassmannian of lines}
In this Section we assume that the divisor $h$ embedding $S$ in $\IP^n$ is $3$-very ample. Lemma \ref{lemma: from g(2, n) to g(1, n)} allows us to compute the number $\alpha' = [\cT(S)] \cdot \sigma_{2, 0, 0}$ in the Grassmannian $\IG(1, n)$ rather than in $\IG(2, n)$. In fact, the set of lines intersecting a given $\Lambda_{n - 4}$ is described in $\IG(1, n)$ by the codimension $3$ Schubert cycle $\sigma_{3, 0}$, while the set of non-reduced subschemes is the image of the exceptional divisor $E$ of the Hilbert--Chow morphism under the embedding $\varphi: \Hilb^2 S \hookrightarrow \IG(1, n)$. Hence, calling $X$ the image of $E$ in $\IG(1, n)$, we have
\begin{equation}\label{eq: alpha' in other grass}
\alpha' = [X] \cdot \sigma_{3, 0}.
\end{equation}

In the Grassmannian $\IG(1, n)$ we have the codimension $3$ cycles
\begin{enumerate}
\item $\sigma_{3, 0}$ representing all the lines in $\IP^n$ meeting a given $\Lambda_{n - 4}$,
\item $\sigma_{2, 1}$ representing all the lines in $\IP^n$ contained in a $\Lambda_{n - 1}$ and meeting a given $\Lambda_{n - 3} \subseteq \Lambda_{n - 1}$;
\end{enumerate}
and the $3$-dimensional cycles
\begin{enumerate}
\item $\sigma_{n - 1, n - 4}$ representing all the lines in $\IP^n$ contained in a given $\Lambda_4$ and through a fixed point $P \in \Lambda_4$,
\item $\sigma_{n - 2, n - 3}$ representing all the lines in $\IP^n$ contained in a given $\Lambda_3$ and meeting a fixed line $\Lambda_1 \subseteq \Lambda_3$.
\end{enumerate}

We can then write
\begin{equation}\label{eq: decomp x in g(1, n)}
[X] = \alpha' \sigma_{n - 1, n - 4} + \beta'' \sigma_{n - 2, n - 3},
\end{equation}
and we observe that the notation $\alpha'$ in this Section is coherent to the one used in \eqref{eq: decomp in g(2, n)} thanks to Lemma \ref{lemma: from g(2, n) to g(1, n)} and \eqref{eq: alpha' in other grass}.

\begin{lemma}\label{lemma: beta''}
Let $S \subseteq \IP^n$ be a surface embedded by a $3$-very ample divisor $h$, and let $C = S \cap \Lambda_{n - 1}$ be an irreducible smooth hyperplane section of $S$. The coefficient $\beta''$ in \eqref{eq: decomp x in g(1, n)} coincides with the degree of the tangent variety of $C$ in $\Lambda_{n - 1}$, and its value is
\[\beta'' = h \cdot (K_S + 3h).\]
\end{lemma}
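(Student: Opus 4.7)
The plan is to adapt the strategy of Lemma~\ref{lemma: beta}, passing from secant to tangent lines. First I would observe that, by the Schubert-cycle duality in $\IG(1,n)$ (under which $\sigma_{n-2,n-3}$ is dual to $\sigma_{2,1}$ and $\sigma_{n-1,n-4}$ is dual to $\sigma_{3,0}$), the decomposition \eqref{eq: decomp x in g(1, n)} gives
\[\beta'' = X\cdot\sigma_{2,1}.\]
Since $X$ is the variety of tangent lines to $S$ inside $\IG(1,n)$, and $\sigma_{2,1}$ parametrises lines contained in a fixed $\Lambda_{n-1}$ meeting a fixed $\Lambda_{n-3}\subseteq\Lambda_{n-1}$, for generic choices $\beta''$ counts tangent lines to $S$ lying in $\Lambda_{n-1}$ and meeting $\Lambda_{n-3}$.

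Next I would identify these tangent lines with the tangent lines of the smooth curve $C = S\cap\Lambda_{n-1}$. If $l\subseteq T_P S$ sits inside $\Lambda_{n-1}$, then $P\in C$, and the smoothness of $C$ forces $T_P S\cap\Lambda_{n-1} = T_P C$, so $l = T_P C$; conversely every $T_P C$ with $P\in C$ is such a tangent line. As $\Lambda_{n-3}$ has codimension $2$ in $\Lambda_{n-1}$, this identifies $\beta''$ with the degree of the tangent developable $\Tan C\subseteq\Lambda_{n-1}$ (which is a surface, since $C$ is smooth, non-degenerate and $n-1\geq 4$).

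Finally I would invoke the classical formula for the degree of the tangent variety of a smooth non-degenerate curve of degree $d$ and genus $g$ (compare~\cite{dale}),
\[\deg\Tan C = 2d + 2g - 2,\]
and substitute $d = h^2$ together with the adjunction formula $g(C) = 1 + \tfrac{1}{2}h\cdot(K_S+h)$, which immediately yields $\beta'' = 2h^2 + h\cdot(K_S+h) = h\cdot(K_S+3h)$. The main technical point I anticipate is checking that the cycle-theoretic intersection $X\cdot\sigma_{2,1}$ reduces to the naive set-theoretic count: this requires a transversality argument for generic $\Lambda_{n-1}$ and $\Lambda_{n-3}$, together with the fact (from Remark~\ref{rem: geom descr}, applying $2$-very ampleness of $h$) that distinct points of $S$ have distinct tangent lines, so no overcounting can occur in identifying the exceptional divisor $E$ with its image $X$.
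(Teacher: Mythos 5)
Your argument is correct and follows essentially the same route as the paper: reduce $\beta'' = X\cdot\sigma_{2,1}$ to counting tangent lines of $S$ lying in a generic hyperplane, use the smoothness of $C = S\cap\Lambda_{n-1}$ to identify these with the tangent lines of $C$, and then apply the classical degree formula $\deg\Tan C = 2d+2g-2$ together with adjunction. The only cosmetic difference is that the paper cites the formula from Proposition~3.3 of the Arrondo--Mallavibarrena--Sols-type reference rather than from \cite{dale}, and leaves the transversality point implicit.
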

\begin{proof}
Observe that $\beta'' = X \cdot \sigma_{2, 1}$ is the number of tangent lines to $S$ which are contained in a given $\Lambda_{n - 1}$ and meet a fixed $\Lambda_{n - 3} \subseteq \Lambda_{n - 1}$. Choosing $\Lambda_{n - 1}$ such that the curve $C = S \cap \Lambda_{n - 1}$ is smooth, it is easy to see that $\beta''$ is the degree of the tangent variety to the curve $C$ in $\Lambda_{n - 1}$: as in the proof of Lemma \ref{lemma: beta}, the smoothness of $C$ implies that a tangent line to $S$ contained in $\Lambda_{n - 1}$ must be a tangent line to $S \cap \Lambda_{n - 1}$.

By \cite[Proposition 3.3]{ac-man-sch}, the degree of the tangent variety of a smooth curve of degree $d$ and genus $g$ in $\IP^n$ is $2d + 2g - 2$: in our case we have $d = h^2$ and $2g - 2 = h \cdot (K_S + h)$, and so we get
\[\beta'' = 2h^2 + h \cdot (K_S + h).\]
\end{proof}

\begin{rem}\label{rem: degree embedding}
We have a linear constraint among $\alpha'$ and $\beta''$: the Pieri formula implies that $\sigma_{1, 0}^3 = \sigma_{3, 0} + 2 \sigma_{2, 1}$, and so we see that $[X] \cdot \sigma_{1, 0}^3 = \alpha' + 2 \beta''$ is the degree of $X$ under the embedding induced by $P = \varphi_{|\sigma_{1, 0}|}$.
\end{rem}

\section{Intersection numbers in the Hilbert scheme}\label{sect: intersection in hilb2}

In this Section we recall some facts about the intersection theory of $\Hilb^2 S$. We mainly focus on the following situation: given an ample divisor $h$ on a surface $S$, we let $H$ be the divisor induced by $h$ on $\Hilb^2 S$ and $E = 2 \delta$ the exceptional divisor of the Hilbert--Chow morphism; we want to compute the intersection numbers $H^4$, $H^3 \delta$, $H^2 \delta^2$, $H \delta^3$ and $\delta^4$.

To fix the notation, we recall and complete the diagram \eqref{eq: second diagram} with the exceptional divisor of the blow up $\eta$:
\[\xymatrix{\widetilde{S \times S} \ar[r]^\eta \ar[d]_\rho & S \times S \ar[d]^\pi\\
\Hilb^2 S \ar[r]^\varepsilon & S^{(2)},} \qquad \qquad \xymatrix{D \ar@{^(->}[r]^j \ar[d]_p & \widetilde{S \times S} \ar[d]^\eta & \\
S \ar@{^(->}[r]^i & S \times S \ar@<0.5ex>[r]^{pr_1} \ar@<-0.5ex>[r]_{pr_2} & S,}\]
where $i: S \longrightarrow S \times S$ is the diagonal embedding and $p: D \longrightarrow S$ is the structure map of the projective bundle $\IP(\cN_{S | S \times S}) \simeq \IP(TS)$ under the isomorphism $D \simeq \IP(\cN_{S | S \times S})$. Given an effective divisor $h$ on $S$ we write $h_i = pr_i^* h$, so $h_1 + h_2$ is invariant under the action exchanging the two factors. As a consequence there exists a divisor $H$ on $\Hilb^2 S$ such that $\rho^* H = \eta^* (h_1 + h_2)$; this $H$ is the divisor induced by $h$ on $\Hilb^2 S$.

The map $\rho$ which was defined as a quotient map on $\widetilde{S \times S}$ can equally be viewed as the order $2$ covering of $\Hilb^2 S$ branched along the exceptional divisor $E$ of the Hilbert--Chow morphism and defined by the line bundle $\cO_{\Hilb^2 S}(\delta)$, where $\delta$ is a divisor such that $E = 2 \delta$. It follows, e.g., from \cite[Lemma I.17.1]{BHPVdV}, that $\rho^* E = 2D$ and $\rho^* \delta = D$. Moreover, by \cite[Example 8.3.12]{intersection-theory} we have that for every pair of cycles $x$ and $y$ on $\Hilb^2 S$ the equality
\[x \cdot y = \frac{1}{2} (\rho^* x \cdot \rho^* y)\]
holds, and this allows us to determine the intersection numbers we are interested in as intersection numbers in $\widetilde{S \times S}$ rather than on $\Hilb^2 S$.

The structure of the Chow ring of $\widetilde{S \times S}$ is related to that of $S \times S$ by \cite[Proposition 6.7]{intersection-theory} for what concerns the additive structure and by \cite[Example 8.3.9]{intersection-theory} for what concerns the multiplicative structure: for every cycle $y, y'$ on $S \times S$ and $x, x'$ on $D$ we have
\[\begin{array}{l}
\eta^* y \cdot \eta^* y' = \eta^*(y \cdot y'),\\
j_* x \cdot j_* x' = j_*(c_1(\cN_{D | \widetilde{S \times S}}) \cdot x \cdot x'),\\
\eta^* y \cdot j_* x = j_*((p^* i^* y) \cdot x).
\end{array}\]

In our situation, we observe the following:
\begin{enumerate}
\item as $i$ is the diagonal embedding, $i^*(h_1 + h_2) = 2h$;
\item as $D \simeq \IP(TS)$ is the projective bundle associated to the tangent bundle $TS$ of $S$, writing $\zeta = c_1(\cO_D(1))$, we have $\cN_{D | \widetilde{S \times S}} = \cO_D(-1)$ and so $c_1(\cN_{D | \widetilde{S \times S}}) = -\zeta$ satisfies the relation
\[\zeta^2 + p^*c_1(TS) \zeta + p^*c_2(TS) = 0.\]
\end{enumerate}

As a consequence of this discussion, we can compute the intersection numbers $H^4$, $H^3 \delta$, $H^2 \delta^2$, $H \delta^3$ and $\delta^4$. Indeed:
\[\begin{array}{rl}
H^4 = & \frac{1}{2} \rho^* H^4 = \frac{1}{2} \eta^*(h_1 + h_2)^4 = \frac{1}{2} 6 h_1^2 h_2^2 =\\
= & 3 h^2 h^2;\\
 & \\
H^3 \delta = & \frac{1}{2} \rho^* H^3 \rho^* \delta = \frac{1}{2} \eta^*(h_1 + h_2)^3 D = \frac{1}{2} \eta^*(h_1 + h_2)^3 j_*(1_D) =\\
= & \frac{1}{2} j_* p^* i^* (h_1 + h_2)^3 = \frac{1}{2} j_* p^* (2h)^3 =\\
= & 0;\\
 & \\
H^2 \delta^2 = & \frac{1}{2} \rho^* H^2 \rho^* \delta^2 = \frac{1}{2} \eta^*(h_1 + h_2)^2 D^2 = \frac{1}{2} \eta^*(h_1 + h_2)^2 \cdot j_*(1_D) j_*(1_D) =\\
= & \frac{1}{2} \eta^*(h_1 + h_2)^2 \cdot j_*(-\zeta) = -\frac{1}{2} j_* (p^* i^*(h_1 + h_2)^2 \cdot \zeta) =\\
= & -\frac{1}{2} j_*(p^*(2h)^2 \cdot \zeta) =\\
= & -2 h^2;\\
 & \\
H \delta^3 = & \frac{1}{2} \eta^* (h_1 + h_2) D \cdot D^2 = \frac{1}{2} j_*(p^*(2h)) j_*(-\zeta) = j_*(p^*h \cdot \zeta^2) =\\
= & -j_*(p^*h \cdot p^* c_1(TS) \cdot \zeta) =\\
= & h \cdot K_S;\\
 & \\
\delta^4 = & \frac{1}{2} (j_* 1_D)^2 \cdot (j_* 1_D)^2 = \frac{1}{2} j_*(\zeta) j_*(\zeta) = -\frac{1}{2} j_*(\zeta^3) =\\
= & \frac{1}{2}(\chitop(S) - K_S^2).
\end{array}\]

The following result is an easy observation, and it gives a condition to transfer the ampleness of a divisor $h$ on $S$ to a divisor on $\Hilb^2 S$ built from the induced divisor $H$.

\begin{lemma}[{\cite[$\S$2]{bertram-coskun}}]\label{lemma: embedding in grass}
Let $S$ be a smooth surface, with a $2$-very ample divisor $h$. Then the divisor $H - \delta$ on $\Hilb^2 S$ is very ample.
\end{lemma}
\begin{proof}
Since $h$ is $2$-very ample we get an embedding
\[\xymatrix{\Hilb^2 S \ar@{^(->}[r]^(0.35){\varphi_1} \ar[dr] & \Grass(2, H^0(S, h)^*) \ar[d]^{\text{Pl\"ucker}}\\
 & \IP^{N - 1},}\]
where the first arrow is the immersion \eqref{eq: hilb to grass map}. By \cite[$\S$2]{bertram-coskun}, the pull-back of the hyperplane divisor of $\IP^{N - 1}$ under the composite embedding is $H - \delta$, which is then a very ample divisor on $\Hilb^2 S$.
\end{proof}

\section{The degree of the secant and tangent variety of a surface}\label{sect: deg sec s, deg tan s}

In this Section we compute the degree of the secant (resp., the tangent) variety of a surface $S$, which is embedded in $\IP^n = \IP(H^0(S, \cO_S(h))^*)$ by means of a $3$-very ample divisor $h$. By Lemma \ref{lemma: alpha} this amounts to compute the value of the coefficient $\alpha$ in \eqref{eq: decomp in g(1, n)}.

\begin{theorem}\label{thm: degree secant variety}
Let $S$ be a surface embedded in $\IP^n = \IP(H^0(S, \cO_S(h))^*)$ by means of a $3$-very ample divisor $h$. The degree of the secant variety to $S$ is
\[\deg \Sec S = \frac{1}{2} (h^2 h^2 - 10 h^2 - 5 h \cdot K_S + \chitop(S) - K_S^2).\]
\end{theorem}
\begin{proof}
To compute the degree of $\Sec S$ we make use of \eqref{eq: degree embedding} together with Lemma \ref{lemma: beta} and Lemma \ref{lemma: gamma}: the embedded variety $\Sigma(S)$ has degree
\[\deg [\Sigma(S)] \cdot \sigma_{1, 0}^4 = \alpha + 3\beta + 2\gamma = \alpha + \frac{5}{2} h^2 h^2 - 7 h^2 - \frac{3}{2} h K_S.\]
On the other hand, by Lemma \ref{lemma: embedding in grass} and the intersection numbers computed in Section \ref{sect: intersection in hilb2},
\[\deg {\sigma_{1, 0}}^4_{|_{\Sigma(S)}} = (H - \delta)^4 = 3 h^2 h^2 - 12 h^2 - 4 h K_S + \frac{1}{2}(\chitop(S) - K_S^2),\]
and so we can now compute the value of $\alpha$.
\end{proof}

As $\sigma_{1, 0}$ is the Schubert cycle representing the lines intersecting a codimension $2$ linear subspace, from the proof of Theorem \ref{thm: degree secant variety} we deduce the following result.

\begin{cor}
Let $S$ be a surface embedded in $\IP^n = \IP(H^0(S, \cO_S(h))^*)$ by means of a $3$-very ample divisor $h$.
\begin{enumerate}
\item There are $3 h^2 h^2 - 12 h^2 - 4 h K_S + \frac{1}{2}(\chitop(S) - K_S^2)$ lines meeting $S$ in $2$ points and four $(n - 2)$-dimensional linear subspaces in general position.
\item The degree of $\Sigma(S)$ under the Pl\"ucker embedding of $\IG(1, n)$ is
\[3 h^2 h^2 - 12 h^2 - 4 h K_S + \frac{1}{2}(\chitop(S) - K_S^2).\]
\end{enumerate}
\end{cor}

We focus now on the tangent variety $\Tan S$.

\begin{theorem}\label{thm: degree tangent variety}
Let $S$ be a surface embedded in $\IP^n = \IP(H^0(S, \cO_S(h))^*)$ by means of a $3$-very ample divisor $h$. The degree of the tangent variety to $S$ is
\[\deg \Tan S = 6 h^2 + 4 h K_S + K_S^2 - \chitop(S).\]
\end{theorem}
\begin{proof}
By Lemma \ref{lemma: from g(2, n) to g(1, n)}, we only need to compute the value of $\alpha'$ in \eqref{eq: decomp x in g(1, n)}. By Remark \ref{rem: degree embedding} and Lemma \ref{lemma: beta''} we have that
\[\alpha' + 2 h K_S + 6 h^2\]
is the degree of the embedded exceptional divisor $E$ of the Hilbert--Chow morphism under the natural inclusion \eqref{eq: hilb to grass map}. As $E = 2\delta$ and this embedding is induced by (the restriction to $E$ of) $\varphi_{|H - \delta|}$, we can compute this degree also as
\[E \cdot (H - \delta)^3 = 2\delta \cdot (H - \delta)^3 = 12 h^2 + 6 h K_S + K_S^2 - \chitop(S).\]
Equating these two expressions leads to $\alpha' = 6 h^2 + 4 h K_S + K_S^2 - \chitop(S)$.
\end{proof}

Combining Theorem \ref{thm: degree tangent variety} with Lemma \ref{lemma: beta'} we have that \eqref{eq: decomp in g(2, n)} reads as
\[\begin{array}{rl}
[\cT(S)] = & (6 h^2 + 4 h K_S + K_S^2 - \chitop(S)) \sigma_{n - 2, n - 2, n - 4} +\\
+ & (\chitop(S) + h \cdot (2K_S + 3h)) \sigma_{n - 2, n - 3, n - 3}.
\end{array}\]
From this expression we easily deduce the following facts.

\begin{cor}
Let $S$ be a surface embedded in $\IP^n = \IP(H^0(S, \cO_S(h))^*)$ by means of a $3$-very ample divisor $h$.
\begin{enumerate}
\item There are $(K_S + 3h)^2$ tangent planes to $S$ which intersect two $(n - 3)$-dimensional linear subspaces in general position.
\item The degree of $\cT(S)$ under the Pl\"ucker embedding of $\IG(2, n)$ is
\[(K_S + 3h)^2.\]
\item The degree of $E$ under the Pl\"ucker embedding of $\IG(1, n)$ (i.e., the degree of the variety parametrising the tangent lines to $S$) is
\[12 h^2 + 6 h K_S + K_S^2 - \chitop(S).\]
\end{enumerate}
\end{cor}
\begin{proof}
Observe that both the number of tangent planes to $S$ which intersect two $(n - 3)$-dimensional linear subspaces in general position and the degree of $\cT(S)$ under the Pl\"ucker embedding of $\IG(2, n)$ can be computed as
\[\begin{array}{rl}
[\cT(S)] \cdot \sigma_{1, 0, 0}^2 = & \cT(S) \cdot (\sigma_{2, 0, 0} + \sigma_{1, 1, 0}) =\\
= & (6 h^2 + 4 h K_S + K_S^2 - \chitop(S)) + (\chitop(S) + h \cdot (2K_S + 3h)) =\\
= & 9h^2 + 6 h K_S + K_S^2 =\\
= & (K_S + 3h)^2.
\end{array}\]
Analogously, the degree of $E$ under the Pl\"ucker embedding of $\IG(1, n)$ is given by
\[[X] \cdot \sigma_{1, 0}^3 = 12 h^2 + 6 h K_S + K_S^2 - \chitop(S).\]
\end{proof}

\section{Two explicit examples}\label{sect: examples}

In this Section we will use Theorem \ref{thm: degree secant variety} and Theorem \ref{thm: degree tangent variety} to compute explicitly the degree of the secant and tangent variety in two concrete examples.

The first one is the one of the $K3$ surfaces, and was the original motivation for the paper (see Remark \ref{rem: motivation}). We focus mainly on the case of the generic projective $K3$ surface, i.e., those having Picard group of rank $1$, because in this case we can link directly the $k$-very ampleness of the generator of the Picard group with its self-intersection.

The second one concerns the surfaces $V_m$ obtained as the image of $\IP^2$ under the $m$-th Veronese embedding.

\subsection{Embedding of \texorpdfstring{$K3$}{K3}'s in \texorpdfstring{$\IP^n$}{Pn}}\label{sect: embedding k3}

Let $S$ be a $K3$ surface with $\Pic S = \IZ \cdot h$, where $h$ is a very ample divisor on $S$ and $h^2 = 2 t$ for some $t \geq 2$. Then we have an embedding $\varphi_{|h|}: S \hookrightarrow \IP^{t + 1} = \IP(H^0(S, \cO_S(h))^*)$. In this Section we study the the $k$-very ampleness of $h$, and in particular we determine the integer $k$ such that $h$ is $k$-very ample but not $(k + 1)$-very ample.

The main tool is the following result by Knutsen, which improves general results of Reider (see \cite{reider} and \cite{beltrametti-francia-sommese}) in the case of $K3$ surfaces.

\begin{theorem}[{\cite[Theorem 1.1]{knutsen}}]\label{thm: k-very ample}
Let $L$ be a nef and big divisor on a $K3$ surface and let $k \geq 0$ be an integer. Then the following are equivalent:
\begin{enumerate}[(i)]
\item $L$ is $k$-very ample;
\item $L^2 \geq 4k$ and there is no effective divisor $D$ satisfying
\begin{equation}\label{eq: cond}
\left\{ \begin{array}{l}
2 D^2 \leq L D \leq D^2 + k + 1 \leq 2k + 2\\
2 D^2 = L D \Longleftrightarrow L \sim 2D \text{ and } L^2 \leq 4k + 4\\
D^2 = k + 1 \Longleftrightarrow L \sim 2D \text{ and } L^2 = 4k + 4.
\end{array} \right.
\end{equation}
\end{enumerate}
\end{theorem}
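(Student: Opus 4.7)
The plan is to establish the equivalence via a Reider-style argument, exploiting the fact that on a $K3$ surface $K_S = 0$, and then to sharpen the conclusion using the fine structure of effective divisors on a $K3$. The direction (i)$\Rightarrow$(ii) is the more accessible one: the inequality $L^2 \geq 4k$ follows from the embedding $\varphi_k: \Hilb^{k+1} S \hookrightarrow \Grass(k+1, H^0(S, \cO_S(L))^*)$ provided by Theorem \ref{thm: cat-goe}, which forces $h^0(\cO_S(L)) \geq k + 2$, combined with Riemann--Roch on a $K3$, reading $h^0(\cO_S(L)) = \tfrac{1}{2} L^2 + 2$ for $L$ nef and big. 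That no effective $D$ satisfying \eqref{eq: cond} can exist is then shown by exhibiting, out of such a $D$, an explicit $0$-dimensional subscheme of length $k + 1$ for which the restriction map in \eqref{eq: ses defining res} fails to be surjective; here one uses a standard construction taking $Z$ to be a suitable length $k + 1$ scheme on a curve in $|D|$ or $|L - D|$.

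The harder direction is (ii)$\Rightarrow$(i), which I would prove by contradiction. Assuming $L^2 \geq 4k$, no divisor $D$ as in \eqref{eq: cond}, but $L$ not $k$-very ample, there exists a $0$-dimensional subscheme $Z$ of length $k + 1$ such that $\res_Z$ in \eqref{eq: ses defining res} is not onto. Dualising and invoking Serre duality together with $K_S = 0$ yields a non-trivial extension
\begin{equation*}
0 \lra \cO_S \lra \mathcal{E} \lra \cI_Z \otimes \cO_S(L) \lra 0,
\end{equation*}
with $c_1(\mathcal{E}) = L$ and $c_2(\mathcal{E}) = k + 1$. The Bogomolov discriminant is $L^2 - 4(k + 1)$, so $\mathcal{E}$ is Bogomolov unstable whenever $L^2 \leq 4k + 3$; the resulting destabilising subsheaf $\cO_S(D) \hookrightarrow \mathcal{E}$ produces an effective $D$, and chasing its numerical invariants via the Hodge index theorem and the induced map $\cO_S(L - D) \to \cI_Z \otimes \cO_S(L)$ yields the rough inequalities of \eqref{eq: cond}. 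When $L^2 \geq 4k + 4$ the Bogomolov approach no longer forces instability, and one must instead reduce to $k$-very ampleness of a smooth curve $C \in |L|$ (of genus $\tfrac{1}{2} L^2 + 1$) via a careful choice of $Z$ of minimal length failing the surjectivity.

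The main obstacle is sharpening the crude Bogomolov bounds to the three displayed lines of \eqref{eq: cond}, and especially handling the biconditional boundary cases $L \sim 2D$. These correspond to very special geometric situations such as hyperelliptic curves in $|L|$ or pencils inducing a double cover, and require the classification of effective divisors on a $K3$ surface in terms of smooth $(-2)$-curves and elliptic classes, together with a delicate analysis of when the extension above splits or the subscheme $Z$ can be transported onto a smaller curve. This bookkeeping, rather than the Reider machinery itself, is where the heart of the argument lies and is the reason the statement substantially refines the general results of \cite{reider} and \cite{beltrametti-francia-sommese}.
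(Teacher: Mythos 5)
This theorem is not proved in the paper at all: it is imported verbatim from Knutsen, so there is no internal proof to compare against. Judged on its own, your outline does follow the broad strategy of the literature (a Reider-type vector bundle argument using $K_S = 0$, plus a refinement for boundary cases), but it contains two concrete errors. First, the necessity of $L^2 \geq 4k$ does not follow from the dimension count you give: the embedding of $\Hilb^{k+1}S$ into $\Grass(k+1, H^0(S,\cO_S(L))^*)$ forces $h^0(\cO_S(L)) \geq k+2$, which via Riemann--Roch yields only $L^2 \geq 2k$ --- strictly weaker than $4k$ for $k \geq 2$ (and the paper's Proposition \ref{prop: bound k-very ampleness} relies precisely on the sharp bound $4k$). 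To get $4k$ one has to produce an actual length $k+1$ subscheme that moves, e.g.\ by restricting to a smooth $C \in |L|$ of genus $g = \tfrac{1}{2}L^2+1$, noting $L|_C = K_C$, and invoking Brill--Noether existence of a $g^1_{k+1}$ whenever $\rho = 2k - g \geq 0$, i.e.\ whenever $L^2 \leq 4k-2$.

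Second, and more seriously, your Bogomolov step has the inequality reversed. For the rank $2$ sheaf $\mathcal{E}$ with $c_1(\mathcal{E}) = L$ and $c_2(\mathcal{E}) = k+1$, Bogomolov instability requires $c_1^2 - 4c_2 > 0$, i.e.\ $L^2 > 4k+4$; so the destabilising sub-line-bundle is forced exactly in the range of \emph{large} $L^2$, not when $L^2 \leq 4k+3$ as you claim, and correspondingly the delicate cases needing a separate argument are the small ones $L^2 \in \{4k, 4k+2, 4k+4\}$ --- which is precisely why condition \eqref{eq: cond} singles out $L^2 \leq 4k+4$ and $L^2 = 4k+4$ in its biconditional clauses. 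As written, your argument produces no divisor $D$ at all in the generic range $L^2 \geq 4k+6$ and appeals to instability exactly where it is unavailable. You would also need to justify that $\mathcal{E}$ can be taken locally free (choose $Z$ minimal so that the Cayley--Bacharach condition holds) before speaking of its Chern classes. With the inequality corrected and the $4k$ bound established by the curve-theoretic argument, the skeleton is the right one, but in its present form the proposal does not prove the statement.
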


In our situation it is easy to see that the generator $h$ of $\Pic S$ is very ample for $t \geq 3$ (e.g., by using results in \cite{sdonat}), here we address the question of finding a bound for its $k$-very ampleness. As a direct application of Theorem \ref{thm: k-very ample} we can show that $h$ is $\left[ \frac{t}{2} \right]$-very ample, but not $\left( \left[ \frac{t}{2} \right] + 1 \right)$-very ample. Indeed if $h$ is $k$-very ample, then by Theorem \ref{thm: k-very ample} we conclude that
\[h^2 \geq 4k \leadsto 2t \geq 4k \leadsto k \leq \frac{t}{2}.\]
So $h$ is not $k$-very ample for $k \geq \left[ \frac{t}{2} \right] + 1$. To show that $h$ is $k$-very ample for $k \leq \left[ \frac{t}{2} \right]$ we only have to show that there is no divisor $D = ah$, with $a > 0$, satisfying \eqref{eq: cond}. In particular, we will show that the inequality $2 D^2 \leq 2k + 2$ is never satisfied. In fact, since $2k + 2 \leq t + 2$, from $2 D^2 = 4t a^2 \leq 2k + 2 \leq t + 2$ we deduce that
\[a^2 \leq \frac{t + 2}{4t} = \frac{1}{4} + \frac{1}{2t} \leq \frac{1}{4} + \frac{1}{4} = \frac{1}{2},\]
and so that $a = 0$.

It follows easily from this observation that $h$ is $3$-very ample if $t \geq 6$. As a consequence we can state the following version of Theorem \ref{thm: degree secant variety} and Theorem \ref{thm: degree tangent variety} for the generic projective $K3$ surface. To deduce it, just recall that for a $K3$ surface $S$ we have $K_S = 0$ and $\chitop(S) = 24$.

\begin{prop}\label{prop: deg of sec and tan of k3}
Let $S$ be a $K3$ surface with an ample divisor $h$ such that $\Pic S = \IZ \cdot h$ and $h^2 = 2t$. Use $h$ to embed $S$ in $\IP^{t + 1}$. If $t \geq 6$, then
\begin{enumerate}
\item the degree of the secant variety to $S$ is
\[\deg \Sec S = 2(t - 2)(t - 3);\]
\item the degree of the tangent variety to $S$ is
\[\deg \Tan S = 12(t - 2).\]
\end{enumerate}
\end{prop}

\begin{rem}
If $S$ is a non-generic $K3$ surface, then $\Pic S$ has rank greater than $1$. In this case, if $h$ is a $3$-very ample divisor on $S$ with $h^2 = 2t$, then under the embedding of $S$ in $\IP(H^0(S, \cO_S(h))^*)$ provided by $\varphi_{|h|}$ we can compute the degree of the secant (resp., tangent) variety with the same formulae as in Proposition \ref{prop: deg of sec and tan of k3}. The only difference is that to prove the $3$-very ampleness of $h$, the condition $t \geq 6$ is still necessary but not sufficient, and we need to use the characterization given by Theorem \ref{thm: k-very ample}.
\end{rem}

\begin{rem}\label{rem: motivation}
In the paper \cite{bcns} the authors determine the automorphism group of the Hilbert scheme of two points on a generic projective $K3$ surface, showing that there are at most two automorphisms and giving a characterisation of the cases when there is a non-trivial automorphism. Up to now there is no geometric description of this extra automorphism: the present paper was written while trying to achieve a description, as an automorphism induced by a more natural one defined on the secant variety of the $K3$ surface.
\end{rem}

\subsection{The Veronese embeddings of \texorpdfstring{$\IP^2$}{P2}}\label{sect: veronese}

As a final application, we want to compute the degree of the secant and tangent variety to the (image of the) $m$-th Veronese embedding of $\IP^2$, i.e., of the surface $V_m = \nu_m(\IP^2)$ where
\[\nu_m = \varphi_{|\cO_{\IP^2}(m)|}: \IP^2 \longrightarrow \IP^{\binom{m + 2}{m} - 1}.\]

Before we apply Theorem \ref{thm: degree secant variety} and Theorem \ref{thm: degree tangent variety} to this situation, we need to take a little digression on the expected dimension of the secant variety. The expected dimension of the secant variety to a smooth projective surface is $5$, nevertheless there exist surfaces whose secant variety has lower dimension. Such surfaces are called \emph{defective}, and their classification was a matter of interest for the Italian school of algebraic geometry: we mention Palatini, Scorza, Terracini and Severi who worked on this topic. So we can give a classification for the smooth projective defective surfaces.

\begin{theorem}[{\cite[(5.37), (6.17), (6.18)]{gh-localdifferentialgeometry}}]
Let $S \subseteq \IP^n$ be a surface.
\begin{enumerate}
\item If $S$ has degenerate tangent variety, then either $S$ lies in a $\IP^3$, or else is a cone or a developable ruled surface.
\item If $S$ is defective, then either $S$ is a cone, the tangential ruled surface of a curve in $\IP^4$, or else $S$ lies in a $\IP^5$.
\item In this last case, if moreover the tangent variety is non-degenerate, then either $S$ lies in $\IP^4$ or else $S$ is the Veronese surface $V_2$.
\end{enumerate}
\end{theorem}

\begin{rem}\label{rem: defective surfaces}
If we restrict to the case of smooth projective surfaces embedded in $\IP^n$ with $n \geq 5$, we see easily that the only defective such surface is the Veronese surface $V_2$. The degree of its secant variety is classically known, see, e.g., \cite[p. 179-180]{gh}, and its value is $\deg \Sec V_2 = 3$.
\end{rem}

\begin{rem}
If we consider the Veronese embeddings of $\IP^n$, given by $\nu_{n, m} = \varphi_{|\cO_{\IP^n}(m)|}$, then the problem of determining the dimension of the higher secant varieties $\Sec^k \nu_{n, m}(\IP^n)$ was solved by the Alexander--Hirschowitz Theorem (see \cite{alexander-hirschowitz}): they all have the expected dimension, except for a finite number of cases where the dimension lowers.
\end{rem}

We consider now the variety $V_m$, switching from the language of divisors to the language of line bundles for ease of notation. As it is explained in \cite[Lemma 0.3.5]{beltrametti-sommese-preservationunderadjunction}, the tensor product of $k$ very ample line bundles gives rise to a $k$-very ample line bundle, and as a consequence we have that $\cO_{\IP^2}(m)$ is $3$-very ample for $m \geq 3$ (observe that we are discarding the value $m = 2$ in view of Remark \ref{rem: defective surfaces}). Recalling that $K_{\IP^2} = \cO_{\IP^2}(-3)$ and that $\chitop(\IP^2) = 3$ we easily deduce the following result.

\begin{prop}
Let $V_n \subseteq \IP^{\binom{m + 2}{m} - 1}$ be the image of $\IP^2$ under the $m$-th Veronese embedding $V_m$, with $m \geq 3$. Then
\[\deg \Sec V_m = \frac{1}{2} (m - 1)(m - 2)(m^2 + 3m - 3), \qquad \deg \Tan V_m = 6(m - 1)^2.\]
\end{prop}

\bibliographystyle{plain}
\bibliography{BiblioSecants}

\end{document}